\newcommand {\N}{\mathbb{N}}
\newcommand {\R}{\mathbb{R}}
\newcommand {\bea}{\begin{eqnarray}}
\newcommand {\ea}{\end{eqnarray}}
\newtheorem{remark}{Remark}[section]
\newenvironment{proof}[1][Proof]{\textbf{#1.} }{\hspace{\stretch{1}}\rule{0.5em}{0.5em}}
\newtheorem{dfn}{Definition}
\newtheorem{thm}{Theorem}
\newtheorem{lem}{Lemma}
\newtheorem{rem}{Remark}
\newtheorem{prp}{Proposition}
\newtheorem{ass}{Assumption}
\newcommand{\assref}[1]{{Assumption~\ref{#1}}}
\newcommand{\propref}[1]{{Proposition~\ref{#1}}}
\journal{Indagationes Mathematicae}
\begin{document}
\begin{frontmatter}

\title{Strong convergence of an fractional exponential integrator scheme  for the finite element discretization of time-fractional SPDE driven by standard and fractional Brownian motions}
\author[ajn]{Aurelien Junior Noupelah}
\ead{noupsjunior@yahoo.fr}
\address[ajn]{Department of Mathematics and Computer Sciences, University of Dschang, P.O. BOX 67, Dschang, Cameroon}

\author[at,atb]{Antoine Tambue}
\cortext[cor1]{Corresponding author}
\ead{antonio@aims.ac.za}
\address[at]{Department of Computer science, Electrical engineering and Mathematical sciences,  Western Norway University of Applied Sciences, Inndalsveien 28, 5063 Bergen.}

\author[ajn]{Jean Louis Woukeng}
\ead{jwoukeng@yahoo.fr}

\begin{abstract}
The aim of this work is to provide  the first strong convergence result of numerical approximation of a general time-fractional second order stochastic   partial differential equation involving a Caputo derivative in time of order $\alpha\in(\frac 12; 1)$ and  driven simultaneously by a multiplicative standard Brownian motion and additive fBm with Hurst parameter $H\in(\frac 12, 1)$, more realistic to model the random effects on transport of particles in medium with thermal memory. We prove  the existence and uniqueness results and perform  the spatial discretization using the finite element and the temporal discretization using a fractional exponential integrator scheme. We provide the temporal and spatial convergence proofs for our fully discrete scheme and the result shows that the convergence orders depend on the regularity of the initial data, the power of the fractional derivative, and the Hurst parameter $H$.  
\end{abstract}
\begin{keyword}
Time fractional derivative\sep  Stochastic heat-type equations \sep Fractional Brownian motion   \sep Finite element method \sep Exponential integrator-Euler scheme \sep  Error estimates.
\end{keyword}
\end{frontmatter}

	\section{Introduction}
\label{intro}
We analyse the strong numerical approximation of the following time fractional SPDE with initial value of the following type
\begin{equation}
	\label{pbfrac}
	\left\{
	\begin{array}{ll}
		^C\partial^{\alpha}_tX(t)+AX(t)=F(X(t))+I_t^{1-\alpha}\left[G(X)\frac{dW(t)}{dt}+\Phi\frac{dB^H(t)}{dt}\right], \\
		X(0)=X_0,\,\,\,\,t\in[0,T].
	\end{array}
	\right.
\end{equation} 
On the Hilbert space $\mathcal{H}=L^2(\Lambda)$, where $\Lambda\subset\R^d$, $d=1,2,3$ is bounded and has smooth boundary, $T>0$ is the final time, $A$ is a linear operator which is unbounded, not necessarily self-adjoint and is assumed to generate a semigroup $S(t):=e^{-tA}$,  $^C\partial^{\alpha}_t$ is the Caputo fractional derivative with $\alpha\in(\frac 12,1)$ and $I^{1-\alpha}_t$ is the fractional integral operator which will be given in the next section. The functions $F:\mathcal{H}\rightarrow \mathcal{H}$, $G: \mathcal{H}\rightarrow \mathcal{H}$ and $\Phi: \mathcal{H}\rightarrow\mathcal{H}$ are deterministic mappings that will be specified later, $X_0$ is the initial data which is random, $W(t)=W(x,t)$ is a $\mathcal{H}$-valued $Q$-Wiener process defined in a filtered probability space $(\Omega, \mathcal{F}, \mathbb{P},\{\mathcal{F}_t\}_{t\geq 0})$ and the term $B^H(t) = B^H(x, t)$ in \eqref{pbfrac} is a $\mathcal{H}$-valued fractional $Q_1$-Brownian motion with Hurst parameter $H\in(\frac 12, 1)$ defined in a filtered probability space $(\Omega, \mathcal{F}, \mathbb{P},\{\mathcal{F}_t\}_{t\geq 0})$, where the covariances operators $Q: \mathcal{H}\rightarrow \mathcal{H}$  and  $Q_1: \mathcal{H}\rightarrow \mathcal{H}$ are positive and linear self-adjoint operators. The filtered probability space $(\Omega, \mathcal{F}, \mathbb{P},\{\mathcal{F}_t\}_{t\geq 0})$ is assumed to fulfil the usual assumptions (see \cite[Definition 2.1.11]{Pre} ). It is well known \cite{Car, Pra} that the noises can be represented as follows
\begin{eqnarray}
	\label{def_Brow}
	W(x,t)=\sum_{i=0}^{\infty}\beta_i(t)Q^{\frac 12}e_i(x)=\sum_{i=0}^{\infty}\sqrt{q_i}\beta_i(t)e_i(x),\\
	\label{def_fracBrow}
	B^H(x,t)=\sum_{i=0}^{\infty}\beta_i^H(t)Q_{1}^{\frac 12}e_i^1(x)=\sum_{i=0}^{\infty}\sqrt{q_i^1}\beta_i^H(t)e_i^1(x),
\end{eqnarray}
where $q_i,\,e_i,\,i\in\N$ are respectively the eigenvalues and eigenfunctions of the covariance operator $Q$,  $q_i^1,\,e_i^1,\,i\in\N$ are respectively the eigenvalues and eigenfunctions of the covariance operator $Q_1$,  $\beta_i$ are mutually independent and identically distributed standard normal distributions and $\beta_i^H$ are mutually independent and identically distributed fractional Brownian motion (fBm). The noises $W$ and $B^H$ are supposed to be independent. Precise assumptions on the nonlinear mappings $G$ and $\Phi$ to ensure the existence of the mild solution of \eqref{pbfrac} will be given in the following section.

Equation of type \eqref{pbfrac} with $\Phi = 0$ might be used to model random effects on transport of particles in medium with thermal memory \cite{Zouc}. So due to the self-similar and long-range dependence properties of the fBm, when modelling such phenomena, it is recommended to incorporate the fBm process in order to obtain a more realistic model. During the last few decades, the theory of fractional partial differential equations has gained considerable interest over time. From the point of view of computations, several numerical methods have been proposed for solving time fractional partial differential equations (for details, see \cite{Jia,Liu,For,Pri,Gao,Elz} and the reference therein). Note that the time stepping methods used in all the works mentioned until now  are based on finite difference methods. However theses schemes are explicit, but unstable, unless the time stepsize is very small.  To solve that drawback, numerical method based on exponential integrators of Adams type have  been  proposed in \cite{Gar1}. The price to pay is the computation of Mittag-Leffler (ML) matrix functions. As ML matrix function is the generalized form of the exponential of matrix function, works in \cite{Gar2,Mor,Pop} have extended  some exponential  computational techniques to ML. Note that up to  now all the numerical algorithms presented are for  time fractional deterministic PDEs with self adjoint  linear operators. 

Actually, few works have been done for numerical methods for Gaussian noise for time fractional stochastic partial differential equation (see \cite{Gun,Zoub,Zouc,Zoud}).
Note that this above works have been done for  self adjoint linear operator, so numerical study for \eqref{pbfrac} with $\Phi \neq 0$ and non self adjoint operator is still an open problem in the field, to the best of our knowledge. 
However, it is important to  mention that  if $H\neq \frac 12$ the process $B^H$ is not a semi-martingale  and the standard stochastic calculus techniques are therefore obsolete while studying SPDEs of type \eqref{pbfrac}.  Alternative approaches to the standard It\^o calculus are therefore required in order to build a stochastic calculus framework for such fBm.   In recent years, there have been various developments of stochastic calculus and stochastic differential equations with respect to the fBm especially for $H\in(\frac 12, 1)$ (see, for example \cite{Alo,Car,Mis}) and  theory of SPDEs driven by fractional Brownian motion  has been also studied. For example, linear and semilinear stochastic equations in a Hilbert space with an infinite dimensional fractional Brownian motion are considered in \cite{Duna,Dunb}. However numerical scheme for time fractional   SPDEs  \eqref{pbfrac} driven  both by fractional Brownian motion and standard Brownian motion have been lacked in the scientific literature  to the best of our knowledge. 
Our goal in this work is to build  the first  numerical method to approximate the time fractional stochastic partial differential equation  \eqref{pbfrac} driven simultaneously by a multiplicative standard Brownian motion and an additive fractional Brownian motion with Hurst parameter $H\in(\frac 12, 1)$ using  finite element method for spatial approximation and a fractional version of exponential \cite{Lor,Nouc} Euler scheme for time discretization.  Since the ML function is more challenging than the exponential function and 
our  main result is based  on novel preliminary  results on  ML functions.  The analysis is complicated and is very different to that of  a standard  exponential integrator scheme \cite{Nou}(where $\alpha=1$)  since the fractional derivative is not local and therefore  numerical solution at given time  depends to all previous numerical solutions up to that time. This is   in contrast to the  standard  exponential  numerical scheme  where the  numerical solution  at a given time depends only of that of the previous nearest solution.
We provided the strong convergence of our fully discrete scheme for \eqref{pbfrac}. Our strong convergence results examine how the convergence orders depend on the regularity of the initial data, the power of the fractional derivative, and the Hurst parameter.   

The rest of the paper is structured as follows. In Section \ref{mathsetting}, Mathematical settings for standard and fractional calculus are presented, along with the existence, uniqueness, and regularities results of the mild solution of SPDE \eqref{pbfrac}. In Section \ref{regpaper4}, numerical schemes for SPDE \eqref{pbfrac} are presented, we discuss about space and time regularity of the mild solution $X(t)$ of \eqref{pbfrac} given by \eqref{solpbfrac}.  The spatial error analysis  is done in Section \ref{spa_convpaper4}. We end the paper in Section \ref{schemesfrac}, by presenting  the strong convergence proof of the our novel  numerical scheme.

\section{Mathematical setting, main assumptions and well posedness problem}
\label{mathsetting}
In this section, we review briefly some useful results on standard and fractional calculus, introduce notations, definitions, preliminaries results which will be needed throughout this paper and the proof of existence and uniqueness of the mild solution of \eqref{pbfrac}.
\begin{dfn}\textbf{[Fractional Brownian motion]}\cite{Mis,Nou}
	\label{fBm}
	An $\mathcal{H}$-valued Gaussian process $\{B^H(t),t\in[0,T]\}$ on $(\Omega, \mathcal{F}, \mathbb{P},\{\mathcal{F}_t\}_{t\geq 0})$ is called a fractional Brownian motion with Hurst parameter $H\in(0,1)$ if 
	\begin{itemize}
		\item $\mathbb{E}[B^H(t)]=0$ for all $t\in\R$,
		\item $\text{Cov}(B^H(t),B^H(s))=\frac 12\left(|t|^{2H}+|s|^{2H}-|t-s|^{2H}\right)$ for all  $t,s\in\R$,
		\item $\{B^H(t),t\in[0,T]\}$ has continuous sample paths  $\mathbb{P}$ a.s.,
	\end{itemize} 
	where $\text{Cov}(X,Y)$ denotes the covariance operator for the Gaussian random variables $X$ and $Y$ and $\mathbb{E}$ stands for the mathematical expectation on $(\Omega, \mathcal{F}, \mathbb{P},\{\mathcal{F}_t\}_{t\geq 0})$. 
\end{dfn}  
Notice that if $H=\frac 12$, the fractional Brownian motion coincides with the standard Brownian motion. Throughout this paper the Hurst parameter $H$ is assumed to belong to $(\frac 12,1)$.

Let $\left(K,\langle .,.\rangle_K,\|.\|\right)$ be a separable Hilbert space. For $p\geq 2$ and for a Banach space U, we denote by $L^p(\Omega,U)$ the Banach space of $p$-integrable $U$-valued random variables. We denote by $L(U,K)$ the space of bounded linear mapping from $U$ to $K$ endowed with the usual operator norm $\|.\|_{L(U,K)}$ and $\mathcal{L}_2(U,K)=HS(U,K)$ the space of Hilbert-Schmidt operators from $U$ to $K$ equipped  with the following norm
\begin{eqnarray}
	\label{normL2UK}
	\left\|l\right\|_{\mathcal{L}_2(U,K)}:=\left(\sum_{i=0}^{\infty}\|l\psi_i\|^2\right)^{\frac 12},\,\,\,\,\,l\in \mathcal{L}_2(U,K),
\end{eqnarray}
where $(\psi_i)_{i\in\N}$ is an orthonormal basis on $U$. The sum in $\eqref{normL2UK}$ is independent of the choice of the orthonormal basis of $U$. We use the notation $L(U,U)=:L(U)$ and $\mathcal{L}_2(U,U)=:\mathcal{L}_2(U)$. It is well known that for all $l\in L(U,K)$ and $l_1\in \mathcal{L}_2(U)$, $ll_1\in \mathcal{L}_2(U,K)$ and 
\begin{eqnarray*}
	\|ll_1\|_{\mathcal{L}_2(U,K)}\leq \left\|l\right\|_{L(U,K)}\|l_1\|_{\mathcal{L}_2(U)}.
\end{eqnarray*} 
We denote by $L^0_2:=HS(Q^{\frac 12}(H),H)$ the space of Hilbert-Schmidt operators from $Q^{\frac 12}(\mathcal{H})$  to $\mathcal{H}$ with corresponding norm $\|.\|_{L^0_2}$ defined by 
\begin{eqnarray}
	\label{normL02}
	\|l\|_{L_2^0}:=\left\|lQ^{\frac 12}\right\|_{HS}=\left(\sum_{i\in\N}\|lQ^{\frac 12}e_i\|^2\right)^{\frac 12},\,\,\,\,\,l\in L_2^0,
\end{eqnarray}
where $(e_i)_{i=0}^{\infty}$ are orthonormal basis of $\mathcal{H}$. 
The following lemma will be very important throughout this paper.
\begin{lem}(It\^{o} Isometry: \cite[(4.30)]{Pra}, \cite[(12)]{Nou})
	\begin{enumerate}
		\item [(i)] Let $\theta\in L^2([0,T];L^0_2)$, then the following holds
		\begin{eqnarray}
			\label{browint}
			\mathbb{E}\left[\left\|\int_0^T\theta(s)dW(s)\right\|^2\right]=\mathbb{E}\left[\int_0^T\left\|\theta(s)\right\|^2_{L^0_2}ds\right].
		\end{eqnarray}
		\item [(ii)] Let $\Phi: \mathcal{H}\rightarrow \mathcal{H}$, then the following holds
		\begin{eqnarray}
			\label{fracint}
			\mathbb{E}\left[\left\|\int_0^T\Phi dB^H(s)\right\|^2\right]\leq C(H)\sum_{i=0}^{\infty}\left(\int_{0}^T\left\|\Phi Q^{\frac 12}e_i\right\|^{\frac 1H}ds\right)^{2H}.
		\end{eqnarray}
	\end{enumerate}
\end{lem}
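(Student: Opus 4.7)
My overall strategy is to reduce both statements to scalar (one-dimensional) isometries/inequalities via the spectral expansions \eqref{def_Brow} and \eqref{def_fracBrow}, and then exploit independence \emph{across the basis index} to kill cross terms when taking the $\mathcal{H}$-norm squared.

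For part (i), the approach is the classical Hilbert-space It\^o isometry argument. I would first expand $W(s)=\sum_{i\ge 0}\sqrt{q_i}\beta_i(s)e_i$, so that
\begin{equation*}
\int_0^T \theta(s)\,dW(s)=\sum_{i\ge 0}\int_0^T \sqrt{q_i}\,\theta(s)e_i\,d\beta_i(s).
\end{equation*}
Taking the squared $\mathcal{H}$-norm and then the expectation, mutual independence of $\{\beta_i\}$ eliminates the cross terms, and the scalar It\^o isometry applied to each $\beta_i$-integral yields $\sum_i \mathbb{E}\!\int_0^T q_i\|\theta(s)e_i\|^2\,ds$, which by \eqref{normL02} is precisely $\mathbb{E}\!\int_0^T \|\theta(s)\|_{L^0_2}^2\,ds$. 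A standard density argument (prove first for elementary integrands and then pass to $L^2([0,T];L^0_2)$) completes this step.

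For part (ii), the same spectral-expansion reduction produces a sum of scalar fBm Wiener integrals with deterministic integrands $\sqrt{q_i^1}\Phi e_i^1$. Cross terms still vanish because the family $\{\beta_i^H\}_i$ is independent \emph{across $i$}, even though each individual $\beta_i^H$ lacks independent increments in time. The key analytic input is the scalar Memin--Mishura--Valkeila-type inequality: for $H\in(\tfrac12,1)$ and any deterministic $f\in L^{1/H}([0,T])$,
\begin{equation*}
\mathbb{E}\left|\int_0^T f(s)\,d\beta^H(s)\right|^2\le c(H)\left(\int_0^T |f(s)|^{\frac 1H}\,ds\right)^{2H}.
\end{equation*}
I would derive this from the exact second-moment identity $\mathbb{E}|\int f\,d\beta^H|^2 = H(2H-1)\iint f(s)f(t)|s-t|^{2H-2}\,ds\,dt$ combined with the Hardy--Littlewood--Sobolev inequality applied to the Riesz-type kernel $|s-t|^{2H-2}$ (the dual-exponent pair being $(1/H,\,1/(1-H))$). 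Summing the resulting componentwise bounds over $i$ gives the stated estimate.

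The principal obstacle, as I see it, is the fBm step in part (ii). Unlike It\^o integrals of orthogonal $L^2$-integrands, which are automatically orthogonal in $L^2(\Omega)$, Wiener integrals against fBm do not enjoy this automatic time-orthogonality, so the justification of the termwise summation must lean on the \emph{cross-index} independence of $\{\beta_i^H\}_{i}$ together with a monotone/dominated-convergence argument on truncated partial sums, with the a priori finiteness of $\sum_i\bigl(\int_0^T \|\sqrt{q_i^1}\Phi e_i^1\|^{1/H}\,ds\bigr)^{2H}$ serving as the dominating majorant. Once that interchange is legitimized, everything else reduces to already-established one-dimensional facts.
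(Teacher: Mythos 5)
Your argument is correct and coincides with the standard proofs behind the two citations the paper relies on (the paper itself does not prove this lemma but quotes part (i) from Da Prato--Zabczyk and part (ii) from the authors' earlier work, where the fBm bound is exactly the Memin--Mishura--Valkeila inequality obtained from the double-integral second-moment formula plus Hardy--Littlewood--Sobolev, summed over the independent components $\beta_i^H$). The only point worth noting is that you prove the scalar estimate for general deterministic $f\in L^{1/H}([0,T])$ rather than just the constant integrand appearing in the statement, which is in fact the generality actually used later in the paper (e.g.\ for integrands of the form $\mathcal{S}_1(t-s)\Phi$), so your extra work is appropriate rather than superfluous.
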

\begin{rem}
	Note that in the case $H=\frac 12$, the constant $C(H)$ in \eqref{fracint} is 1 and the inequality becomes the equality. In this case, the result \eqref{fracint} is then identically to \eqref{browint}.
\end{rem}
More details on the definition of stochastic integral with respect to fractional $Q$-Brownian motion and their property are given in e.g \cite{Alo, Duna,Dunb,Mis}
\begin{dfn}(\cite[(2.1.1),(2.4.17)]{Kil})
	\label{cap}
	The Caputo-type derivative of order $\alpha$ with respect to $t$ is defined for all $t>0$ by
	\begin{eqnarray}
		\label{cap_der}
		^C\partial^{\alpha}_tX(t)=	
		\left\{
		\begin{array}{ll}
			\frac{1}{\Gamma(1-\alpha)}\int_{0}^t\frac{\partial X(s)}{\partial s}\frac{ds}{(t-s)^{\alpha}},\,\,\,\,0<\alpha<1 \\
			\frac{\partial X}{\partial t},\,\,\,\,\alpha=1,
		\end{array}
		\right.
	\end{eqnarray}
	and the Riemann-Liouville fractional integral operator $I^{\alpha}_t$ is defined for all $t>0$ by
	\begin{eqnarray}
		\label{RieLiou_int}
		I^{\alpha}_tX(t)=	
		\left\{
		\begin{array}{ll}
			\frac{1}{\Gamma(\alpha)}\int_{0}^t(t-s)^{\alpha-1}X(s)ds,\,\,\,\,0<\alpha<1 \\
			X(t),\,\,\,\,\alpha=0.
		\end{array}
		\right.
	\end{eqnarray} 
	where $\Gamma(\cdot)$ is the gamma function.
\end{dfn}
\begin{prp}
	\label{Mai}
	Considering the generalized Mittag-Leffler function (see \cite{Hau}) $E_{\alpha,\beta}(t)$ and let the Mainardi's Wright-type function (see \cite{Mai}) $M_{\alpha}(\theta)$ defined as follows:
	\begin{eqnarray*}
		E_{\alpha,\beta}(t)=\sum_{k=0}^{\infty}\frac{t^k}{\alpha k+\beta}\hspace{1cm}\text{and}\hspace{1cm}M_{\alpha}(\theta)=\sum_{n=0}^{\infty}\frac{(-1)^n\theta^n}{n!\Gamma(1-\alpha(1+\theta))},\,\,\,\,\,0<\alpha<1,\,\,\,\theta>0,
	\end{eqnarray*}
	then the following results hold:
	\begin{eqnarray}
		\label{Mai1}
		M_{\alpha}(\theta)\geq 0,\hspace{1cm}\int_{0}^{\infty}\theta^{\mu}M_{\alpha}(\theta)d\theta=\frac{\Gamma(1+\mu)}{\Gamma(1+\alpha\mu)},\hspace{0.5cm}-1<\mu<\infty,\hspace{0.5cm}\theta>0
	\end{eqnarray}
	and
	\begin{eqnarray}
		\label{Mai2}
		E_{\alpha,1}(t)=\int_0^{\infty}M_{\alpha}(\theta)e^{t\theta}d\theta,\,\,\,\,\,E_{\alpha,\alpha}(t)=\int_0^{\infty}\alpha\theta M_{\alpha}(\theta)e^{t\theta}d\theta.
	\end{eqnarray}
\end{prp}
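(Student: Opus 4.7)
The proposition collects four classical facts about the Mainardi $M$-function $M_\alpha$ and its link to Mittag-Leffler functions. The plan is to establish them in order of logical dependence: non-negativity of $M_\alpha$ first, then the moment identity, and finally the two integral representations of $E_{\alpha,1}$ and $E_{\alpha,\alpha}$, which fall out from the moment identity together with the exponential series.

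For non-negativity, I would first derive the Hankel contour representation
\[
M_\alpha(\theta) = \frac{1}{2\pi i}\int_{Ha} e^{\sigma-\theta\sigma^\alpha}\sigma^{\alpha-1}\,d\sigma,
\]
by plugging Hankel's identity $1/\Gamma(z)=(2\pi i)^{-1}\int_{Ha} e^\sigma \sigma^{-z}\,d\sigma$ into each term of the defining series for $M_\alpha$ and recognising the resulting sum as $e^{-\theta\sigma^\alpha}\sigma^{\alpha-1}$. I would then deform the Hankel contour $Ha$ onto the two edges of the cut along the negative real axis; this expresses $M_\alpha(\theta)$ as a single real integral of a manifestly non-negative integrand when $0<\alpha<1$. (Alternatively, one may cite the known identification of $M_\alpha$ with the probability density of the $\alpha$-stable subordinator at time one, from which $M_\alpha\geq 0$ is automatic.)

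The moment identity is the main technical step and the principal obstacle: a naive termwise integration of the series for $M_\alpha$ fails because $\int_0^\infty \theta^{n+\mu}\,d\theta$ diverges. To bypass this I would substitute the Hankel representation into $\int_0^\infty \theta^\mu M_\alpha(\theta)\,d\theta$ and swap the two integrals, which is justified by the exponential decay of $e^\sigma$ along $Ha$ together with $\mu>-1$. The inner $\theta$-integral is the standard Euler integral $\int_0^\infty \theta^\mu e^{-\theta\sigma^\alpha}\,d\theta=\Gamma(1+\mu)\sigma^{-\alpha(1+\mu)}$, leaving
\[
\int_0^\infty \theta^\mu M_\alpha(\theta)\,d\theta=\frac{\Gamma(1+\mu)}{2\pi i}\int_{Ha} e^\sigma \sigma^{-1-\alpha\mu}\,d\sigma=\frac{\Gamma(1+\mu)}{\Gamma(1+\alpha\mu)},
\]
by a second invocation of Hankel's formula with $z=1+\alpha\mu$.

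Given the moment identity, the two Mittag-Leffler representations in \eqref{Mai2} should follow by purely algebraic manipulation. Expanding $e^{t\theta}$ in a power series and exchanging sum and integral (legitimate since $M_\alpha\geq 0$ and the resulting series converges absolutely) reduces $\int_0^\infty M_\alpha(\theta)e^{t\theta}\,d\theta$ to $\sum_{k\geq 0} t^k\,\Gamma(1+k)/(k!\,\Gamma(1+\alpha k))=\sum_{k\geq 0} t^k/\Gamma(1+\alpha k)=E_{\alpha,1}(t)$. The same strategy applied to $\alpha\theta M_\alpha(\theta)$ produces $\sum_{k\geq 0} \alpha\,\Gamma(k+2)\,t^k/(k!\,\Gamma(1+\alpha(k+1)))$, which I would simplify using the functional equation $\Gamma(1+\alpha(k+1))=\alpha(k+1)\Gamma(\alpha(k+1))$ to obtain $\sum_{k\geq 0} t^k/\Gamma(\alpha k+\alpha)=E_{\alpha,\alpha}(t)$. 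Once the Hankel-transform step is in hand, everything else is bookkeeping.
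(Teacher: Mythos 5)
The paper offers no proof of this proposition at all: it is stated as a known fact with pointers to the cited works of Haubold--Mathai--Saxena and Mainardi, and those references are where the identities are actually established. Your self-contained argument is essentially the standard derivation from that literature, and it is sound: the Hankel representation $M_\alpha(\theta)=\frac{1}{2\pi i}\int_{Ha}e^{\sigma-\theta\sigma^\alpha}\sigma^{\alpha-1}\,d\sigma$ obtained by termwise application of Hankel's formula, the Fubini swap reducing the moment integral to $\frac{\Gamma(1+\mu)}{2\pi i}\int_{Ha}e^\sigma\sigma^{-1-\alpha\mu}\,d\sigma=\Gamma(1+\mu)/\Gamma(1+\alpha\mu)$, and the series expansion of $e^{t\theta}$ combined with $\Gamma(1+\alpha(k+1))=\alpha(k+1)\Gamma(\alpha(k+1))$ for the $E_{\alpha,\alpha}$ identity are all correct; you also rightly identify that naive termwise integration of the $M_\alpha$ series against $\theta^\mu$ diverges, which is the one trap in this computation. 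Two small points deserve care. First, for the Fubini step you need $\operatorname{Re}(\sigma^\alpha)>0$ along the contour, which forces you to take the Hankel rays at an angle $\phi$ with $\pi/2<\phi<\pi/(2\alpha)$; this is possible precisely because $\alpha<1$ and should be said explicitly. Second, your claim that collapsing the Hankel contour onto the branch cut yields a \emph{manifestly} non-negative integrand is too optimistic: the standard deformation produces an oscillatory factor of the form $\sin(\theta r^\alpha\sin\pi\alpha+\pi\alpha)$, which is not sign-definite, so non-negativity does not drop out of that representation alone. Your parenthetical fallback --- identifying $M_\alpha$ with the density of the inverse $\alpha$-stable subordinator at unit time --- is the clean way to get $M_\alpha\geq 0$ and is what you should lead with. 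Finally, note that the statement as printed contains two typographical slips (a missing $\Gamma$ in the denominator of the Mittag-Leffler series and $\theta$ in place of $n$ inside $\Gamma(1-\alpha(1+\cdot))$); your argument silently uses the corrected definitions, which is the right reading.
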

In the rest of this paper to simplify the presentation, we assume the SPDE \eqref{pbfrac} to be second order of the following type.
\begin{eqnarray}
	\label{modelfrac}
	&&^C\partial_t^{\alpha}X(t)+\left[-\nabla\cdot(D\nabla X(t,x))+q\cdot \nabla X(t,x)\right]\nonumber\\
	&=&f(x,X(t,x))+I_t^{1-\alpha}\left[g(x,X(t,x))\frac{dW(t,x)}{dt}+\phi(x)\frac{dB^H(t,x)}{dt}\right]
\end{eqnarray}
where $f, g:\Lambda\times\R\rightarrow \R$ is globally Lipschitz continuous,  $\phi:\R\rightarrow \R$ is bounded. In the abstract framework \eqref{modelfrac}, the linear operator $A$ takes the form 
\begin{eqnarray*}
	&&Au=-\sum_{i,j=1}^d\frac{\partial}{\partial x_i}\left(D_{i,j}(x)\frac{\partial u}{\partial x_j}\right)+\sum_{i=1}^d q_i(x)\frac{\partial u}{\partial x_i}\\
	&&D=\left(D_{i,j}\right)_{1\leq i,j\leq d},\hspace{1cm}q=\left(q_i\right)_{1\leq i\leq d},
\end{eqnarray*}
where $D_{i,j}\in L^{\infty}(\Lambda)$, $q_i\in L^{\infty}(\Lambda)$. We assume that there exists a positive constant $c_1>0$ such that
\begin{eqnarray*}
	\sum_{i,j=1}^d D_{i,j}(x)\xi_i\xi_j\geq c_1|\xi|^2,\hspace{1cm}\xi\in\R^d,\hspace{0.5cm}x\in\bar{\Omega},
\end{eqnarray*}
The functions $F:\mathcal{H}\rightarrow \mathcal{H}$, $G:\mathcal{H}\rightarrow L^0_2$ and $\Phi \in L^0_2$ are defined by 
\begin{eqnarray*}
	(F(v))(x)=f(x,v(x)),\quad (G(v)u)(x)=g(x,v(x))\cdot u(x),\quad (\Phi w)(x)=\phi(x)\cdot w(x)
\end{eqnarray*}
for all $x\in\Lambda$, $v\in \mathcal{H}$, $u,\,w\in Q^{1/2}(\mathcal{H})$. As in \cite{Fuj,Lor}, we introduce two spaces $\mathbb{H}$, and $V$ such that $\mathbb{H}\subset V$; the two spaces depend on the boundary conditions of $\Lambda$ and the domain of the operator A. For Dirichlet (or first-type) boundary conditions, we take 
\begin{equation*}
	V=\mathbb{H}=H^1_0(\Lambda)=\{v\in H^1(\Lambda):v=0\,\,\text{on}\,\,\partial \Lambda\}.
\end{equation*}
For Robin (third-type) boundary condition and Neumann (second-type) boundary condition, which is a special case of Robin boundary condition, we take $V=H^1(\Omega)$ 
\begin{equation*}
	\mathbb{H}=\{v\in H^2(\Lambda): \partial v/\partial v_{\mathcal{A}}+\alpha_0 v=0,\hspace{0.5cm}\text{on} \hspace{0.5cm}\partial\Lambda\},\hspace{1cm}\alpha_0\in\R,
\end{equation*}
where $\partial v/\partial v_{\mathcal{A}}$ is the normal derivative of $v$ and $v_{\mathcal{A}}$ is the exterior pointing normal $n=(n_i)$ to the boundary of $\mathcal{A}$ given by
\begin{equation*}
	\partial v/\partial v_{\mathcal{A}}=\sum_{i,j=1}^d n_i(x)D_{i,j}(x)\frac{\partial v}{\partial x_j},\,\,\,\,\,x\in\partial\Lambda.
\end{equation*}
Using  G\aa rding's inequality (see e.g. \cite{ATthesis}), it holds that there exist two constants $c_0$ and $\lambda_0>0$ such that   the bilinear form $a(.,.)$ associated to $A$ satisfies
\begin{eqnarray}
	\label{ellip1}
	a(v,v)\geq \lambda_0\Vert v \Vert^2_{H^1(\Lambda)}-c_0\Vert v\Vert^2, \quad v\in V.
\end{eqnarray}
By adding and subtracting $c_0Xdt$ in both sides of \eqref{modelfrac}, we have a new linear operator still denoted by $A$, and the corresponding  bilinear form is also still denoted by $a$. Therefore, the following coercivity property holds
\begin{eqnarray}
	\label{ellip2}
	a(v,v)\geq \lambda_0\Vert v\Vert^2_{H^1(\Lambda)},\quad v\in V.
\end{eqnarray}
Note that we have create a new linear term $-c_0X$ in the right-hand side of \eqref{modelfrac}. Thus we obtain a new equivalent form to \eqref{modelfrac} as
\begin{eqnarray}
	\label{modelfrac2}
	^C\partial_t^{\alpha}X(t)+\left[-\nabla\cdot(D\nabla X(t,x))+q\cdot \nabla X(t,x)-c_0X(t,x)\right]\nonumber\\
	=f(x,X(t,x))-c_0X(t,x)+I_t^{1-\alpha}\left[g(x,X(t,x))\frac{dW(t,x)}{dt}+\phi(x)\frac{dB^H(t,x)}{dt}\right].
\end{eqnarray} 
we rewrite it in its contracted form as follows
\begin{eqnarray}
	\label{pbfrac2}
	^C\partial_t^{\alpha}X(t)+A X(t)=F(X(t))+I_t^{1-\alpha}\left[G(X)\frac{dW(t)}{dt}+\Phi\frac{dB^H(t)}{dt}\right].
\end{eqnarray}
Note that the expression of nonlinear term $F$ has changed as we included the term $-c_0X$ in the new nonlinear term that we still denote by $F$. The coercivity property \eqref{ellip2} implies that  $A$ is the infinitesimal generator of a \emph{contraction} semigroup $S(t)=e^{-t A}$  on $L^{2}(\Lambda)$ \cite{Fuj}.  Note that this is due to the fact that 
the real part of the eigenvalues of  $A$  is positive.
Note also  that the coercivity  property \eqref{ellip2} also implies that $A$ is a
positive operator and its fractional powers are well defined and  for any $\alpha>0$ we have
\begin{eqnarray}
	\label{fractional}
	\left\{\begin{array}{rcl}
		A^{-\alpha} & =& \frac{1}{\Gamma(\alpha)}\displaystyle\int_0^\infty  t^{\alpha-1}{\rm e}^{-tA}dt,\\
		A^{\alpha} & = & (A^{-\alpha})^{-1},
	\end{array}\right.
\end{eqnarray}
where $\Gamma(\alpha)$ is the Gamma function.
\begin{remark}
	\label{contraction}
	As we have mentioned, the linear  operator  $A$ is the infinitesimal generator of a contraction semigroup $S(t)=e^{-t A}$,  and therefore
	\begin{eqnarray}
		\label{contract}
		\Vert S(t)\Vert_{L(\mathcal{H})} \leq 1.
	\end{eqnarray}
	Let 
	\begin{eqnarray}
		\label{semi}
		\mathcal{S}_1(t):=E_{\alpha,1}(-t^{\alpha}A)\hspace{1cm}\text{and}\hspace{1cm} \mathcal{S}_2(t):=E_{\alpha,\alpha}(-t^{\alpha}A),
	\end{eqnarray}
	 be the fractional semigroups  from \propref{Mai} and \eqref{contract}, we also have
	\begin{eqnarray}
		\label{contract1}
		\Vert \mathcal{S}_1(t)\Vert_{L(\mathcal{H})} \leq 1 \hspace{1cm}\text{and}\hspace{1cm}\Vert \mathcal{S}_2(t)\Vert_{L(\mathcal{H})} \leq \frac{\alpha\Gamma(2)}{\Gamma(1+\alpha)}.
	\end{eqnarray}
	Note that  \eqref{contract}-\eqref{contract1} are also hold is  $S(t)$ and $S_1(t)$ are replaced respectively  by their semi discrete form $S_h(t)$ and $S_{1h}(t)$ obtained after  the finite element method.
	In the sequel of this paper, the contraction propriety \eqref{contract}-\eqref{contract1}  for  $S_{1h}(t)$ will play a key role in the analysis our numerical scheme.
\end{remark}

Following the same lines as \cite[(2.2)-(2.5)]{Zoub,Zouc} and using the equivalent model \eqref{pbfrac2}, we represent the mild solution of \eqref{pbfrac} as:
\begin{dfn}
	\label{mildsolpbfrac}
	For any $0<\alpha<1$, a stochastic process $\{X(t), t\in[0,T]\}$ is called mild solution of \eqref{pbfrac} if 
	\begin{enumerate}
		\item [1.] $X(t)$ is $\mathcal{F}_t$-adapted on the filtration $(\Omega, \mathcal{F}, \mathbb{P},\{\mathcal{F}_t\}_{t\geq 0})$,
		\item [2.] $\{X(t), t\in[0,T]\}$ is measurable and $\mathbb{E}\left[\int_{0}^T\left\|X(t)\right\|^2dt\right]<\infty$,
		\item [3.] For all $t\in[0,T]$, 
		\begin{eqnarray}
			\label{solpbfrac}
			X(t)&=&\mathcal{S}_1(t)X_0+\int_0^t (t-s)^{\alpha-1}\mathcal{S}_2(t-s)F(X(s))ds\nonumber\\
			&+&\int_{0}^{t}\mathcal{S}_1(t-s)G(X(s))dW(s)+\int_{0}^{t}\mathcal{S}_1(t-s)\Phi dB^H(s),
		\end{eqnarray}
		hold a.s. where $\mathcal{S}_1(t)$ and $\mathcal{S}_2(t)$ are defined by \eqref{semi}.
	\end{enumerate}
\end{dfn}
Thanks to \eqref{Mai2}, the fractional semigroup operators $\mathcal{S}_1(t)$ and $\mathcal{S}_2(t)$ can be rewritten as 
\begin{eqnarray}
	\label{S1}
	\mathcal{S}_1(t):=E_{\alpha,1}(-At^{\alpha})=\int_0^{\infty}M_{\alpha}(\theta)e^{-A\theta t^{\alpha}}d\theta=\int_0^{\infty}M_{\alpha}(\theta)S(\theta t^{\alpha})d\theta,
\end{eqnarray}
and
\begin{eqnarray}
	\label{S2}
	\mathcal{S}_2(t):=E_{\alpha,\alpha}(-At^{\alpha})=\int_0^{\infty}\alpha\theta M_{\alpha}(\theta)e^{-A\theta t^{\alpha}}d\theta=\int_0^{\infty}\alpha\theta M_{\alpha}(\theta)S(\theta t^{\alpha})d\theta.
\end{eqnarray}
we obtain the following properties of that fractional semigroup $(\mathcal{S}_i(t))_{t\in(0,T)}$, $i=1, 2$.  
\begin{lem}(\cite[Lemma 4]{Nouc})
	\label{fracbound}
	Let $t\in(0,T)$, $0<t_1<t_2\leq T$, $T<\infty$,  $\frac 12<\alpha<1$, $\rho\geq 0$, $0\leq \eta<1$, $0\leq \kappa<\varpi\leq 1$ and  $\delta\geq 0$, there exists a constant $C>0$ such that for all $i=1,2$
	\begin{eqnarray}
		\label{semigroup_prp}
		\|A^{\rho}\mathcal{S}_i(t)\|_{L(\mathcal{H})}\leq C t^{-\alpha\rho},\hspace{1cm}\|A^{-\eta}(\mathcal{S}_1(t_2)-\mathcal{S}_1(t_1))\|_{L(\mathcal{H})}\leq C (t_2-t_1)^{\alpha\eta},
	\end{eqnarray}
	\begin{eqnarray}
		\label{semigroup_prp1*}
		\|A^{\kappa}(\mathcal{S}_1(t_2)-\mathcal{S}_1(t_1))\|_{L(\mathcal{H})}\leq C (t_2-t_1)^{\alpha(\varpi-\kappa)}t_1^{-\alpha\varpi},
	\end{eqnarray}
	\begin{eqnarray}
		\label{semigroup_prp2*}
		\left\|t_1^{\alpha-1}\mathcal{S}_2(t_1)-t_2^{\alpha-1}\mathcal{S}_2(t_2)\right\|_{L(\mathcal{H})}\leq C (t_2-t_1)^{1-\alpha}t_1^{\alpha-1}t_2^{\alpha-1},
	\end{eqnarray}
	and
	\begin{eqnarray}
		\label{semigroup_prp*}
		A^{\delta}\mathcal{S}_i(t)=\mathcal{S}_i(t)A^{\delta}\hspace{0.25cm}\text{on}\hspace{0.25cm}D(A^{\delta}).
	\end{eqnarray}
\end{lem}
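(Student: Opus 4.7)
My overall plan is to reduce each of the four estimates to the corresponding bound for the contraction semigroup $S(r)=e^{-rA}$ through the Mainardi integral representations \eqref{S1} and \eqref{S2}, and then to control the resulting $\theta$-integrals using the moment identity in \eqref{Mai1}. The commutativity \eqref{semigroup_prp*} is immediate from these representations, since $A^\delta$ commutes with $S(\theta t^\alpha)$ on $D(A^\delta)$ by the spectral definition \eqref{fractional}, and one may therefore interchange $A^\delta$ with the Bochner integrals in \eqref{S1}--\eqref{S2}.

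For the smoothing bound in \eqref{semigroup_prp}, I would push $A^\rho$ inside and invoke the classical analytic-semigroup estimate $\|A^\rho S(r)\|_{L(\mathcal{H})}\leq C r^{-\rho}$ (available since $A$ generates a contraction, hence analytic, semigroup on $\mathcal{H}$), giving
\begin{eqnarray*}
\|A^\rho\mathcal{S}_i(t)\|_{L(\mathcal{H})}\leq C\int_0^\infty\theta^{i-1}M_\alpha(\theta)(\theta t^\alpha)^{-\rho}\,d\theta=Ct^{-\alpha\rho}\int_0^\infty\theta^{i-1-\rho}M_\alpha(\theta)\,d\theta,
\end{eqnarray*}
where \eqref{Mai1} shows the remaining integral is finite. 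For the Hölder bound in \eqref{semigroup_prp}, I would use $\|A^{-\eta}(S(r_2)-S(r_1))\|_{L(\mathcal{H})}\leq C(r_2-r_1)^\eta$ with $r_i=\theta t_i^\alpha$, together with the subadditivity inequality $t_2^\alpha-t_1^\alpha\leq(t_2-t_1)^\alpha$ valid for $0<\alpha<1$, then integrate against $M_\alpha$ and apply \eqref{Mai1} with $\mu=\eta<1$.

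For \eqref{semigroup_prp1*}, the same blueprint applies, now with the sharper interpolation estimate $\|A^\kappa(S(r_2)-S(r_1))\|_{L(\mathcal{H})}\leq C(r_2-r_1)^{\varpi-\kappa}r_1^{-\varpi}$, obtained by writing $A^\kappa(S(r_2)-S(r_1))=(e^{-(r_2-r_1)A}-I)A^{-(\varpi-\kappa)}\cdot A^\varpi S(r_1)$ and using the standard bound $\|(e^{-sA}-I)A^{-\beta}\|_{L(\mathcal{H})}\leq Cs^\beta$ for $\beta\in[0,1]$. With $r_i=\theta t_i^\alpha$ the $\theta$-powers combine to $\theta^{-\kappa}$, and $\int_0^\infty\theta^{-\kappa}M_\alpha(\theta)d\theta$ is finite by \eqref{Mai1} since $\kappa<\varpi\leq 1$.

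The main obstacle is \eqref{semigroup_prp2*}. My plan is to set $g_\theta(t):=t^{\alpha-1}S(\theta t^\alpha)$ and compute
\begin{eqnarray*}
g_\theta'(t)=(\alpha-1)t^{\alpha-2}S(\theta t^\alpha)-\alpha\theta t^{2\alpha-2}AS(\theta t^\alpha).
\end{eqnarray*}
Using $\|AS(\theta t^\alpha)\|_{L(\mathcal{H})}\leq C(\theta t^\alpha)^{-1}$ and contractivity of $S$, one gets the $\theta$-uniform bound $\|g_\theta'(t)\|_{L(\mathcal{H})}\leq Ct^{\alpha-2}$, whence $\|g_\theta(t_2)-g_\theta(t_1)\|_{L(\mathcal{H})}\leq C(t_2-t_1)t_1^{\alpha-2}$. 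Combining this with the trivial bound $\|g_\theta(t_2)-g_\theta(t_1)\|_{L(\mathcal{H})}\leq 2t_1^{\alpha-1}$ and interpolating (first to power $1-\alpha$, second to power $\alpha$) produces $\|g_\theta(t_2)-g_\theta(t_1)\|_{L(\mathcal{H})}\leq C(t_2-t_1)^{1-\alpha}t_1^{2(\alpha-1)}$, which is in turn bounded by $C(t_2-t_1)^{1-\alpha}t_1^{\alpha-1}t_2^{\alpha-1}$ because $t_2\geq t_1$ and $\alpha-1<0$. Integrating against $\alpha\theta M_\alpha(\theta)$ and using $\int_0^\infty\theta M_\alpha(\theta)d\theta=\Gamma(2)/\Gamma(1+\alpha)<\infty$ from \eqref{Mai1} closes the estimate. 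The delicate point is the choice of interpolation exponents: they must be tuned so that the Hölder exponent $1-\alpha$ in $(t_2-t_1)$ and the singular factor at $t_1\to 0^+$ line up exactly, while leaving the $\theta$-integral convergent against the Mainardi density.
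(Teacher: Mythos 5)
Your treatment of \eqref{semigroup_prp*}, \eqref{semigroup_prp} and \eqref{semigroup_prp1*} follows exactly the route the paper takes for the one estimate it actually proves in detail, namely \eqref{semigroup_prp1*}: transfer the problem to the analytic semigroup $S$ via the Mainardi representations \eqref{S1}--\eqref{S2}, use the classical bounds $\|A^{\varpi}S(r)\|_{L(\mathcal{H})}\leq Cr^{-\varpi}$ and $\|(e^{-sA}-I)A^{-\beta}\|_{L(\mathcal{H})}\leq Cs^{\beta}$, and close the $\theta$-integral with \eqref{Mai1}. Those parts are correct. (One small caveat on \eqref{semigroup_prp}: for $i=1$ your $\theta$-integral $\int_0^\infty\theta^{-\rho}M_\alpha(\theta)\,d\theta$ converges only for $\rho<1$, so the argument does not cover arbitrary $\rho\geq 0$ as the statement literally permits; the paper only ever invokes \eqref{semigroup_prp} with $\rho<1$, so this is a defect of the statement rather than of your proof.)

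The proof of \eqref{semigroup_prp2*}, however, fails at its last step. Interpolating your two bounds $M_1=C(t_2-t_1)t_1^{\alpha-2}$ and $M_2=2t_1^{\alpha-1}$ with exponents $1-\alpha$ and $\alpha$ yields $C(t_2-t_1)^{1-\alpha}t_1^{2(\alpha-1)}$, and you then assert this is bounded by $C(t_2-t_1)^{1-\alpha}t_1^{\alpha-1}t_2^{\alpha-1}$. The inequality goes the wrong way: since $\alpha-1<0$ and $t_1\leq t_2$ we have $t_1^{\alpha-1}\geq t_2^{\alpha-1}$, hence $t_1^{2(\alpha-1)}\geq t_1^{\alpha-1}t_2^{\alpha-1}$, so what you have derived is strictly weaker than \eqref{semigroup_prp2*}; moreover no choice of interpolation exponent helps, because $M_1^{\lambda}M_2^{1-\lambda}=C(t_2-t_1)^{\lambda}t_1^{\alpha-1-\lambda}$ never produces a $t_2^{\alpha-1}$ factor. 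The repair is to not throw away information when integrating the derivative bound: from your ($\theta$-uniform) estimate $\|g_\theta'(s)\|_{L(\mathcal{H})}\leq Cs^{\alpha-2}$ one gets directly $\|g_\theta(t_2)-g_\theta(t_1)\|_{L(\mathcal{H})}\leq C\int_{t_1}^{t_2}s^{\alpha-2}\,ds=\frac{C}{1-\alpha}\left(t_1^{\alpha-1}-t_2^{\alpha-1}\right)=\frac{C}{1-\alpha}\cdot\frac{t_2^{1-\alpha}-t_1^{1-\alpha}}{t_1^{1-\alpha}t_2^{1-\alpha}}$, and the subadditivity $t_2^{1-\alpha}-t_1^{1-\alpha}\leq(t_2-t_1)^{1-\alpha}$ (valid since $0<1-\alpha<1$) gives exactly $C(t_2-t_1)^{1-\alpha}t_1^{\alpha-1}t_2^{\alpha-1}$ without any interpolation; integrating against $\alpha\theta M_\alpha(\theta)$, whose first moment is finite by \eqref{Mai1}, then finishes the proof as you intended.
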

\begin{proof}
	See \cite[Lemma 4]{Nouc} for the proof of \eqref{semigroup_prp}, \eqref{semigroup_prp*} and \eqref{semigroup_prp2*}. The proof of \eqref{semigroup_prp1*} is similar to that of \cite[(29)]{Nouc}, we have hence
	\begin{eqnarray*}
		&&\left\|A^{\kappa}\left(\mathcal{S}_1(t_2)-\mathcal{S}_1(t_1)\right)\right\|_{L(\mathcal{H})}\\
		&=&\left\|\int_{0}^{\infty}A^{\kappa}M_{\alpha}(\theta)\left(S(\theta t_2^{\alpha})-S(\theta t_1^{\alpha})\right)d\theta\right\|_{L(\mathcal{H})}\\
		&\leq&\int_{0}^{\infty}M_{\alpha}(\theta)\left\|A^{-\varpi}S(\theta t_1^{\alpha})\right\|_{L(\mathcal{H})}\left\|A^{\kappa-\varpi}\left(e^{-A\theta (t_2^{\alpha}-t_1^{\alpha})}-I\right)\right\|_{L(\mathcal{H})}d\theta\\
		&\leq&C\int_{0}^{\infty}\left[\theta\,(t_2^{\alpha}-t_1^{\alpha})\right]^{\varpi-\kappa}\left(\theta t_1^{\alpha}\right)^{-\varpi} M_{\alpha}(\theta)d\theta\\
		&\leq& C(t_2^{\alpha}-t_1^{\alpha})^{\varpi-\kappa}t_1^{-\alpha\varpi}\int_{0}^{\infty}\theta^{-\kappa} M_{\alpha}(\theta)d\theta\\
		&\leq& C\frac{\Gamma(1-\kappa)}{\Gamma(1-\alpha\kappa)}(t_2-t_1)^{\alpha(\varpi-\kappa)}t_1^{-\alpha\varpi}\\
		&\leq&  C\,(t_2-t_1)^{\alpha(\varpi-\kappa)}t_1^{-\alpha\varpi}.
	\end{eqnarray*}
\end{proof}
\begin{rem}
	\label{discfracsemigroup}
	Lemma \ref{fracbound} also holds with a uniform constant $C$ (independent of $h$) when $A$ and $\mathcal{S}_i(t)$, $i=1,2$ are replaced respectively by their discrete versions $A_h$ and $\mathcal{S}_{ih}(t)$ defined in Section \ref{spa_convpaper4}.
\end{rem}
In order to ensure the existence and the uniqueness of mild solution for SPDE \eqref{pbfrac} and for the purpose of convergence analysis we make the following assumptions.
\begin{ass}[Initial Value]
	\label{init}
	We assume that $X_0:\Omega\rightarrow H$ is $\mathcal{F}_0/\mathcal{B}(\mathcal{H})$-measurable mapping and $X_0\in L^2(\Omega, D(A^{\frac{2H+\beta-1}{2}}))$ with $\beta\in (1-2H,1]$, $\frac 12<\alpha<1$.
\end{ass}
\begin{ass}[Non linearity term $F$]
	\label{nonlin}
	We assume the non-linear mapping $F: \mathcal{H}\rightarrow \mathcal{H}$, to be linear growth and Lipschitz continuous ie, for $\kappa\in[0,1]$ there exists positive constant $L > 0$ such that
	\begin{eqnarray}
		\|F(u)-F(v)\|^2\leq L\|u-v\|^2,\quad\|A^{\kappa}F(u)\|^2\leq L\left(1+\|A^{\kappa}u\|^2\right),\quad u,v\in \mathcal{H}.
	\end{eqnarray}
\end{ass}
\begin{ass}[Standard Noise term]
	\label{diff}
	We assume that the diffusion coefficient $G: \mathcal{H}\rightarrow L^0_2$ satisfies the global Lipschitz condition and the linear growth ie, for $\tau\in[0,1]$, there exists a positive constant $L>0$ such that 
	\begin{eqnarray}
		\|G(u)-G(v)\|_{L^0_2}^2\leq L\|u-v\|^2,\hspace{1cm}\|A^{\tau}G(u)\|_{L^0_2}^2\leq L\left(1+\|A^{\tau}u\|^2\right),\hspace{0.5cm}u,v\in \mathcal{H},
	\end{eqnarray}
\end{ass} 
\begin{ass}[Fractional Noise term]
	\label{fracdiff}
	The deterministic mapping $\Phi:\mathcal{H}\rightarrow \mathcal{H}$ satisfies
	\begin{eqnarray}
		\|A^{\frac{\beta-1}2}\Phi\|^2_{L_2^0}<\infty ,
	\end{eqnarray}
	where $\beta$ is defined as in \assref{init}.
\end{ass} 
We are no to present the following result of existence and uniqueness of mild solution of SPDE \eqref{pbfrac}. 
\begin{thm}
	\label{well_poss_pbfrac}
	Under the Assumptions \ref{init}-\ref{fracdiff}, if   $2L\left[\frac{T^{2\alpha-1} }{2\alpha-1}\left(\frac{\alpha\Gamma(2)}{\Gamma(1+\alpha)}\right)^2+1\right]<1$  the SPDEs \eqref{pbfrac} admits a unique mild solution $X(t)\in L^2(\Omega\times[0,T],\mathcal{H})$ asymptotic stable in mean square ie
	\begin{eqnarray}
		\label{boun}
		\mathbb{E}\left[\sup_{0\leq t\leq T}\left\|X(t)\right\|^2\right]<\infty,
	\end{eqnarray}
	where $L^2(\Omega\times[0,T],\mathcal{H})$ denotes the space of squared integrable $\mathcal{H}$-valued random variables, $c_0$ is the constant from the bilinear form associated to $A$, $\alpha$ is the order of the Caputo derivative and $L$ is the Lipschitz condition from Assumption \ref{diff} defined on $[0,T]$.
\end{thm}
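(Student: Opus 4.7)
The plan is to apply a Banach fixed-point argument on $L^2(\Omega\times[0,T],\mathcal{H})$ to the integral operator $\mathcal{T}$ defined from the mild formulation \eqref{solpbfrac} by
\begin{eqnarray*}
\mathcal{T}X(t) &=& \mathcal{S}_1(t)X_0 + \int_0^t(t-s)^{\alpha-1}\mathcal{S}_2(t-s)F(X(s))ds\\
&&+\int_0^t\mathcal{S}_1(t-s)G(X(s))dW(s)+\int_0^t\mathcal{S}_1(t-s)\Phi\,dB^H(s).
\end{eqnarray*}
First I would verify that $\mathcal{T}$ maps the space into itself by bounding its four components: (i) the initial-datum term via $\|\mathcal{S}_1(t)\|_{L(\mathcal{H})}\leq 1$ from \eqref{contract1} together with \assref{init}; (ii) the drift convolution via Cauchy--Schwarz in time, $\|\mathcal{S}_2(t-s)\|_{L(\mathcal{H})}\leq \alpha\Gamma(2)/\Gamma(1+\alpha)$, and the linear growth of $F$ in \assref{nonlin}, which produces exactly the prefactor $\frac{T^{2\alpha-1}}{2\alpha-1}\left(\alpha\Gamma(2)/\Gamma(1+\alpha)\right)^2$ appearing in the hypothesis; (iii) the It\^o integral via the isometry \eqref{browint} and the linear growth of $G$ from \assref{diff}; (iv) the fBm integral via \eqref{fracint} together with \assref{fracdiff}.

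Second, since the fBm term drops out of $\mathcal{T}X(t)-\mathcal{T}Y(t)$, the contraction step reduces to estimating the $F$- and $G$-contributions only. Using $\|a+b\|^2\leq 2\|a\|^2+2\|b\|^2$ and substituting the Lipschitz bounds of $F$ and $G$ for the linear-growth estimates above yields
\begin{eqnarray*}
\mathbb{E}\|\mathcal{T}X(t)-\mathcal{T}Y(t)\|^2 \leq 2L\left[\frac{T^{2\alpha-1}}{2\alpha-1}\left(\frac{\alpha\Gamma(2)}{\Gamma(1+\alpha)}\right)^2+1\right]\int_0^t\mathbb{E}\|X(s)-Y(s)\|^2 ds,
\end{eqnarray*}
so that the hypothesized smallness of $2L[\cdots]$ furnishes a contraction (either directly, or after introducing a Bielecki-weighted norm to absorb the time integral), hence a unique fixed point $X\in L^2(\Omega\times[0,T],\mathcal{H})$. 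The estimate \eqref{boun} would then be extracted by taking the supremum over $t\in[0,T]$ in the mild formula, applying Burkholder--Davis--Gundy to push the supremum inside the expectation of the It\^o integral, reusing the previous bounds with $X$ itself, and closing through Gronwall's inequality together with the same smallness hypothesis.

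The main obstacle is step (iv), the fBm integral, because \assref{fracdiff} only controls $\|A^{(\beta-1)/2}\Phi\|_{L_2^0}$ rather than $\|\Phi\|_{L_2^0}$. I would exploit the smoothing of $\mathcal{S}_1$ by writing $\mathcal{S}_1(t-s)\Phi=A^{(1-\beta)/2}\mathcal{S}_1(t-s)\cdot A^{(\beta-1)/2}\Phi$ via the commutation relation \eqref{semigroup_prp*}, then apply \eqref{semigroup_prp} with $\rho=(1-\beta)/2$ to obtain $\|A^{(1-\beta)/2}\mathcal{S}_1(t-s)\|_{L(\mathcal{H})}\leq C(t-s)^{-\alpha(1-\beta)/2}$, and finally feed the resulting expression through the $\|\cdot\|^{1/H}$ integrand of \eqref{fracint} by H\"older's inequality with exponents $(2H,\,2H/(2H-1))$. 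The constraint $\beta>1-2H$ appearing in \assref{init}/\assref{fracdiff} is precisely what keeps the time singularity $\alpha(1-\beta)$ integrable after the $2H$-th power is taken, which is what ties the regularity of $X_0$, the Caputo order $\alpha$, and the Hurst parameter $H$ together in the statement.
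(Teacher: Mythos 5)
Your proposal follows essentially the same route as the paper: a Banach fixed-point argument on $L^2(\Omega\times[0,T],\mathcal{H})$, with the same four-term bound for self-mapping (including the insertion of $A^{(1-\beta)/2}$ and the smoothing estimate from Lemma \ref{fracbound} to handle the fBm integral under \assref{fracdiff}), and the same observation that the fBm and initial-datum terms cancel in the difference, yielding exactly the contraction constant $2L\bigl[\tfrac{T^{2\alpha-1}}{2\alpha-1}\bigl(\tfrac{\alpha\Gamma(2)}{\Gamma(1+\alpha)}\bigr)^2+1\bigr]$ required by the hypothesis. The paper closes the contraction step directly from the smallness assumption (no Bielecki norm needed), but your alternative is equally valid.
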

\begin{proof}
	We define the operator $\xi: L^2(\Omega\times[0,T],\mathcal{H}) \rightarrow L^2(\Omega\times[0,T],\mathcal{H})$ by,
	\begin{eqnarray}
		\label{op}
		\xi x(t)&=&\mathcal{S}_1(t)x_0+\int_0^t (t-s)^{\alpha-1}\mathcal{S}_2(t-s)F(x(s))ds+\int_0^t \mathcal{S}_1(t-s)G(x(s))dW(s)\nonumber\\
		&+&\int_0^t \mathcal{S}_1(t-s)\Phi dB^H(s).
	\end{eqnarray}
	In order to obtain our result, we use the Banach fixed point to prove that the mapping $\xi$ has a unique fixed point in $L^2(\Omega\times[0,T],\mathcal{H})$. The proof will be splitted into two steps.
	\begin{enumerate}
		\item[\textbf{Step 1:}] First, we show that $\xi \in L^2(\Omega\times[0,T],\mathcal{H}) \subset L^2(\Omega\times[0,T],\mathcal{H})$.\\
		Let $x\in L^2(\Omega\times[0,T],\mathcal{H})$, using \eqref{op}, triangle inequality and the estimate $\left(\sum_{i=1}^n a_i\right)^2\leq n\sum_{i=1}^n a_i^2$, we have
		\begin{eqnarray}
			\label{xi}
			\mathbb{E}[\left\|\xi x(t)\right\|^2]&\leq& 4\mathbb{E}\left\|\mathcal{S}_1(t)x_0\right\|^2+4\mathbb{E}\left\|\int_0^t (t-s)^{\alpha-1}\mathcal{S}_2(t-s)F(x(s))ds\right\|^2\nonumber\\
			&+&4\mathbb{E}\left\|\int_0^t \mathcal{S}_1(t-s)G(x(s))dW(s)\right\|^2 +4 \mathbb{E}\left\|\int_0^t \mathcal{S}_1(t-s)\Phi dB^H(s) \right\|^2\nonumber\\
			&=:& 4\sum_{i=1}^4 I_i.
		\end{eqnarray}
		Using the fact that fractional semigroup $\mathcal{S}_1(t)$ is a contraction \eqref{contract1} yields
		\begin{eqnarray}
			\label{xi1}
			I_1:=\mathbb{E}\left\|\mathcal{S}_1(t)x_0\right\|^2 \leq \mathbb{E}\left\|x_0\right\|^2<\infty.
		\end{eqnarray}
		Using Cauchy-Schwarz inequality, the stability property of fractional semigroup $\mathcal{S}_2(t)$ \eqref{contract1} and Assumption \ref{nonlin} with $\kappa=0$ yields
		\begin{eqnarray}
			\label{xi2}
			I_2&:=&\mathbb{E}\left\|\int_0^t (t-s)^{\alpha-1}\mathcal{S}_2(t-s)F(x(s))ds\right\|^2\nonumber\\
			&\leq& L\left(\frac{\alpha\Gamma(2)}{\Gamma(1+\alpha)}\right)^2\left(\int_0^t(t-s)^{2\alpha-2}ds\right)\int_0^t \mathbb{E}(1+\left\|x(s)\right\|^2)ds\nonumber\\
			&\leq& \frac{c_0^2 \,t^{2\alpha}}{2\alpha-1}\left(\frac{\alpha\Gamma(2)}{\Gamma(1+\alpha)}\right)^2\left(1+\left\|x\right\|^2_{L^2(\Omega\times[0,T],\mathcal{H})}\right)<\infty.
		\end{eqnarray}
		Using also the contraction argument of the semigroup $\mathcal{S}_1(t)$ \eqref{contract1}, Ito isometry \eqref{browint} and Assumption \ref{diff} with $\tau=0$, hold
		\begin{eqnarray}
			\label{xi3}
			I_3&:=& \mathbb{E}\left\|\int_0^t \mathcal{S}_1(t-s)G(x(s))dW(s)\right\|^2\nonumber\\
			&=& \mathbb{E}\left[\int_0^t\left\|\mathcal{S}_1(t-s)G(x(s))\right\|^2_{L^0_2} ds\right] \nonumber\\
			&\leq& L\int_0^t \mathbb{E}(1+\left\|x(s)\right\|^2)ds\nonumber\\
			&\leq& L t\left(1+\left\|x\right\|^2_{L^2(\Omega\times[0,T],\mathcal{H})}\right)<\infty.
		\end{eqnarray}
		Applying \eqref{fracint}, inserting an appropriate power of $A$ and Lemma \ref{fracbound} with $\rho=\delta=\frac{1-\beta}2$, we have 
		\begin{eqnarray}
			\label{xi4}
			I_4&:=& \mathbb{E}\left\|\int_0^t \mathcal{S}_1(t-s)\Phi dB^H(s) \right\|^2\nonumber\\
			&\leq& C(H)\sum_{i=0}^{\infty}\left(\int_0^t\left\|\mathcal{S}_1(t-s)\Phi Q^{\frac 12}e_i\right\|^{\frac 1H}ds\right)^{2H}\nonumber\\
			&\leq& C(H) \sum_{i=0}^{\infty}\left(\int_0^t\left\|A^{\frac{1-\beta}2}\mathcal{S}_1(t-s)\right\|^{\frac 1H}_{L(\mathcal{H})}\left\| A^{\frac{\beta-1}2}\Phi Q^{\frac 12}e_i\right\|^{\frac 1H}ds\right)^{2H}\nonumber\\
			&\leq& C(H)\left(\int_0^t(t-s)^{\frac{-\alpha(1-\beta)}{2H}}ds\right)^{2H}\left( \sum_{i=0}^{\infty}\left\| A^{\frac{\beta-1}2}\Phi Q^{\frac 12}e_i\right\|^2\right)\nonumber\\
			&\leq& C(H)\left(1-\frac{\alpha(1-\beta)}{2H}\right)^{-2H}t^{2H-\alpha(1-\beta)}\left\|A^{\frac{\beta-1}2}\Phi\right\|^2_{L^0_2}<\infty.
		\end{eqnarray}
		
		Inserting \eqref{xi1}, \eqref{xi2}, \eqref{xi3} and \eqref{xi4} in \eqref{xi} implies that $\mathbb{E}\left\|\xi x(t)\right\|^2<\infty$ for all $t\in[0,T]$. Thus we conclude that $\xi x\in L^2(\Omega\times[0,T],\mathcal{H})$.
		
		\item[\textbf{Step 2:}] Next, we show that the mapping $\xi$ is contractive.\\
		To see this, let $x,y\in L^2(\Omega\times[0,T],\mathcal{H})$, then from \eqref{op} we get
		\begin{eqnarray}
			\label{step2}
			\mathbb{E}\left\|(\xi x)(t)-(\xi y)(t)\right\|^2&\leq&2\mathbb{E}\left\|\int_0^t (t-s)^{\alpha-1}\mathcal{S}_2(t-s)(F(x(s))-F(y(s)))ds\right\|^2\nonumber\\
			&+& 2\mathbb{E}\left\|\int_0^t \mathcal{S}_1(t-s)(G(x(s))-G(y(s)))dW(s)\right\|^2\nonumber\\
			&=:& 2J_1+2J_2.
		\end{eqnarray}
		Cauchy-Schwarz inequality, the stability property of fractional semigroup $\mathcal{S}_2(t)$ \eqref{contract1} and Assumption \ref{nonlin} yield
		\begin{eqnarray}
			\label{step21}
			J_1&:=&\mathbb{E}\left\|\int_0^t (t-s)^{\alpha-1}\mathcal{S}_2(t-s)(F(x(s))-F(y(s)))ds\right\|^2\nonumber\\
			&\leq&  L\left(\frac{\alpha\Gamma(2)}{\Gamma(1+\alpha)}\right)^2\left(\int_0^t(t-s)^{2\alpha-2}ds\right)\left(\int_0^t \mathbb{E}\left\|x(s)-y(s)\right\|^2ds\right)\nonumber\\
			&\leq& \frac{L \,t^{2\alpha-1}}{2\alpha-1}\left(\frac{\alpha\Gamma(2)}{\Gamma(1+\alpha)}\right)^2\int_0^t \mathbb{E}\left\|x(s)-y(s)\right\|^2ds.
		\end{eqnarray}
		Using the contraction argument of the semigroup, Ito isometry \eqref{browint} and Assumption \ref{diff}, we have 
		\begin{eqnarray}
			\label{step22}
			J_2&:=& \mathbb{E}\left\|\int_0^t \mathcal{S}_1(t-s)(G(x(s))-G(y(s)))dW(s)\right\|^2\nonumber\\
			&=& \mathbb{E}\left[\int_0^t \left\| \mathcal{S}_1(t-s)(G(x(s))-G(y(s)))\right\|^2_{L^0_2}ds\right]\nonumber\\
			&\leq& L \int_0^t \mathbb{E}\left\|x(s)-y(s)\right\|^2ds.
		\end{eqnarray}
		Hence putting \eqref{step21} and \eqref{step22} in \eqref{step2} holds
		\begin{eqnarray}
			\label{step2*}
			\mathbb{E}\left\|(\xi x)(t)-(\xi y)(t)\right\|^2\leq 2L\left[\frac{T^{2\alpha-1} }{2\alpha-1}\left(\frac{\alpha\Gamma(2)}{\Gamma(1+\alpha)}\right)^2+1\right]\int_0^t \mathbb{E}\left\|x(s)-y(s)\right\|^2ds.
		\end{eqnarray}
		\end{enumerate}
		The fact that $2L\left[\frac{T^{2\alpha-1} }{2\alpha-1}\left(\frac{\alpha\Gamma(2)}{\Gamma(1+\alpha)}\right)^2+1\right]<1$ and \eqref{step2*} imply that $\xi$ is a contraction mapping. 
	Thus, applying the Banach fixed point principle,  it follows that there exists a unique $x(t)\in L^2(\Omega;\mathcal{H})$ that solve the equation \eqref{pbfrac}, and $x(t)$ is asymptotic stable in mean square.
	The proof is thus completed. 
\end{proof}

In all that follows, $C$ denotes a positive constant that may change from line to line. In the Banach space $D(A^{\frac{\gamma}{2}})$, $\gamma\in\R$, we use the notation $\|A^{\frac{\gamma}{2}}\cdot\|=\|\cdot\|_{\gamma}$ and we now present the following regularity results.

\section{Regularity of the mild solution}
\label{regpaper4}
We discuss the space and time regularity of the mild solution $X(t)$ of \eqref{pbfrac} given by \eqref{solpbfrac} in this section. 
The following theorem presents the spatial   and  time regularity result.
\begin{thm}
	\label{regfrac}
	Under Assumptions \ref{init}-\ref{fracdiff}, the unique mild solution $X(t)$ given by \eqref{solpbfrac} satisfied the following space regularity
	\begin{eqnarray}
		\label{sparegfrac}
		\left\|A^{\frac{2H+\beta-1}2}X(t)\right\|_{L^2(\Omega;\mathcal{H})}\leq C\left(1+\left\|A^{\frac{2H+\beta-1}2}X_0\right\|_{L^2(\Omega;\mathcal{H})}\right),\hspace{1cm}t\in[0,T],
	\end{eqnarray}
	and for $0\leq t_1<t_2\leq T$, the following optimal time regularity hold
	\begin{eqnarray}
		\label{timeregfrac}
		\left\|X(t_2)-X(t_1)\right\|_{L^2(\Omega;\mathcal{H})}\leq C(t_2-t_1)^{\frac{\min(\alpha(2H+\beta-1),2-2\alpha)}2}\left(1+\left\|A^{\frac{2H+\beta-1}2}X_0\right\|_{L^2(\Omega;\mathcal{H})}\right) 
	\end{eqnarray}
	Moreover \eqref{sparegfrac} and \eqref{timeregfrac} hold when $A$ and $X$ are replaced by their semidiscrete version $A_h$ and $X^h$ defined in section \ref{spa_convpaper4}.
\end{thm}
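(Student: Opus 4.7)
Set $\sigma:=(2H+\beta-1)/2$, which lies in $(0,H]\subset(0,1)$ under \assref{init} because $\beta\in(1-2H,1]$ and $H<1$. The idea is to apply $A^{\sigma}$ (for the spatial bound) or to form the difference $X(t_2)-X(t_1)$ (for the temporal bound) in the mild-solution formula \eqref{solpbfrac}, split into the four constituent pieces, and estimate each one using \lemref{fracbound}. The semidiscrete case follows at once because \lemref{fracbound} extends with uniform constants to $A_h$ by \rmref{discfracsemigroup}, so I focus on the continuous case.

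\textbf{Spatial estimate.} I decompose $\|A^{\sigma}X(t)\|_{L^{2}(\Omega;\mathcal{H})}$ into four contributions. The initial-value term is bounded by $\|A^{\sigma}X_0\|$ using the commutativity \eqref{semigroup_prp*} and the contraction \eqref{contract1}. The drift term is handled via Cauchy-Schwarz, the contraction of $\mathcal{S}_2$, and \assref{nonlin} with $\kappa=\sigma$, producing a weakly-singular Volterra right-hand side proportional to $\int_0^t (t-s)^{\alpha-1}(1+\|A^{\sigma}X(s)\|_{L^2})\,ds$ (integrable since $\alpha>1/2$). The It\^o integral is controlled via \eqref{browint}, the contraction \eqref{contract1}, and \assref{diff} with $\tau=\sigma$. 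For the fBm integral, I use \eqref{fracint} and factor
\begin{equation*}
A^{\sigma}\mathcal{S}_1(t-s)\Phi Q^{1/2}e_i = A^{H}\mathcal{S}_1(t-s)\,A^{(\beta-1)/2}\Phi Q^{1/2}e_i,
\end{equation*}
bounding the first factor by \eqref{semigroup_prp} with $\rho=H$; after raising to $1/H$, integrating (the exponent $-\alpha>-1$ is integrable), and raising back to $2H$, \assref{fracdiff} makes the sum over $i$ finite. A generalised Gronwall inequality for weakly-singular Volterra kernels then closes the bound and delivers \eqref{sparegfrac}.

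\textbf{Temporal estimate.} For $t_1<t_2$, decompose $X(t_2)-X(t_1)$ into four differences by splitting each time or stochastic integral at $t_1$. The initial-value part contributes $(t_2-t_1)^{\alpha\sigma}\|A^\sigma X_0\|$ after writing $(\mathcal{S}_1(t_2)-\mathcal{S}_1(t_1))X_0=A^{-\sigma}(\mathcal{S}_1(t_2)-\mathcal{S}_1(t_1))\,A^{\sigma}X_0$ and invoking \eqref{semigroup_prp} with $\eta=\sigma$. The drift split gives rates $(t_2-t_1)^{\alpha}$ (via the contraction on $[t_1,t_2]$) and the critical $(t_2-t_1)^{1-\alpha}$ (via \eqref{semigroup_prp2*} on $[0,t_1]$ after uniformly bounding $\int_0^{t_1}(t_1-s)^{\alpha-1}(t_2-s)^{\alpha-1}ds$ by $t_1^{2\alpha-1}/(2\alpha-1)$). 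The It\^o split produces $(t_2-t_1)^{1/2}$ and $(t_2-t_1)^{\alpha\sigma}$, the latter via \eqref{semigroup_prp} with $\eta=\sigma$ applied to $A^{-\sigma}(\mathcal{S}_1(t_2-s)-\mathcal{S}_1(t_1-s))$. The fBm split is handled by \eqref{fracint}; on $[0,t_1]$ I use \eqref{semigroup_prp1*} with $\kappa=(1-\beta)/2$ and $\varpi=H$ (admissible because $\beta>1-2H$ forces $\kappa<\varpi\le 1$), which yields rate $\alpha\sigma$, while on $[t_1,t_2]$ I apply \eqref{semigroup_prp} with $\rho=(1-\beta)/2$ as in the spatial estimate. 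Keeping the smallest exponent and using $t_2-t_1\le T$ to absorb the larger ones, the worst H\"older exponent becomes $\min\{\alpha(2H+\beta-1),\,2-2\alpha\}/2$, after which the spatial bound already established controls $\|A^{\sigma}X(s)\|_{L^{2}}$ uniformly and yields \eqref{timeregfrac}.

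\textbf{Main difficulty.} The tightest step will be the fBm integral: the isometry \eqref{fracint} is only an inequality with an inner $1/H$-power, so the factorisation of the $A$-powers must be calibrated so that (i) the semigroup exponent is integrable on the relevant interval, (ii) the $\Phi$-side matches \assref{fracdiff} exactly, and (iii) the separation $\varpi-\kappa$ chosen in \eqref{semigroup_prp1*} equals $\sigma$ so that the H\"older exponent $\alpha\sigma$ produced by the fBm time split matches the one coming from the initial-value term. The consistency of the parameter ranges $\alpha\in(1/2,1)$, $H\in(1/2,1)$, $\beta\in(1-2H,1]$ is precisely what makes every admissibility condition in \lemref{fracbound} succeed.
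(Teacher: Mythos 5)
Your proposal is correct and follows essentially the same route as the paper: the same four-term (resp.\ seven-term) decomposition of the mild solution, the same factorisations of fractional powers of $A$ against the Mittag--Leffler semigroups, and the same parameter choices in \lemref{fracbound} ($\rho=H$ for the fBm term, $\eta=\delta=\tfrac{2H+\beta-1}{2}$ for the semigroup differences, $\kappa=\tfrac{1-\beta}{2}$, $\varpi=H$ in \eqref{semigroup_prp1*}), leading to the identical exponents. The only cosmetic difference is in the drift term of the spatial bound, where the paper removes the singular kernel by Cauchy--Schwarz and closes with the standard Gronwall lemma while you invoke a weakly-singular Gronwall inequality; both work since $\alpha>\tfrac12$.
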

\begin{proof}
	We begin by proving \eqref{sparegfrac}. Premultiplying \eqref{solpbfrac} by $A^{\frac{2H+\beta-1}2}$, taking the squared-norm and the estimate $\left(\sum_{i=1}^n a_i\right)^2\leq n\sum_{i=1}^n a_i^2$ yields
	\begin{eqnarray}
		\label{spa}
		\left\|A^{\frac{2H+\beta-1}2}X(t)\right\|^2_{L^2(\Omega;\mathcal{H})}&\leq& 4\left\|A^{\frac{2H+\beta-1}2}\mathcal{S}_1(t)X_0\right\|^2_{L^2(\Omega;\mathcal{H})}\nonumber\\
		&+& 4\left\|\int_0^t (t-s)^{\alpha-1}A^{\frac{2H+\beta-1}2}\mathcal{S}_2(t-s)F(X(s))ds\right\|^2_{L^2(\Omega;\mathcal{H})}\nonumber\\
		&+& 4\left\|\int_0^t A^{\frac{2H+\beta-1}2}\mathcal{S}_1(t-s)G(X(s))dW(s)\right\|^2_{L^2(\Omega;\mathcal{H})}\nonumber\\
		&+& 4\left\|\int_0^t A^{\frac{2H+\beta-1}2}\mathcal{S}_1(t-s)\Phi dB^H(s) \right\|^2_{L^2(\Omega;\mathcal{H})}\nonumber\\
		&=:& 4\sum_{i=1}^4 II_i.
	\end{eqnarray}
	We bound $II_i, i=1,2,3$ one by one. \\
	Firstly, using the fact that $\mathcal{S}_1(t)$ is a contraction \eqref{contract1} and \eqref{semigroup_prp*} with $\delta=\frac{2H+\beta-1}2$ yields
	\begin{eqnarray}
		\label{spa1}
		II_1:=\left\|A^{\frac{2H+\beta-1}2}\mathcal{S}_1(t)X_0\right\|^2_{L^2(\Omega;\mathcal{H})}\leq \left\|A^{\frac{2H+\beta-1}2}X_0\right\|^2_{L^2(\Omega;\mathcal{H})}.
	\end{eqnarray} 
	Secondly, using Cauchy-Schwarz inequality, the stability property of fractional semigroup $\mathcal{S}_2(t)$ \eqref{contract1}, \eqref{semigroup_prp*} and Assumption \ref{nonlin} with $\delta=\kappa=\frac{2H+\beta-1}2$ yields
	\begin{eqnarray}
		\label{spa2}
		II_2&:=&\mathbb{E}\left[\left\|\int_0^t (t-s)^{\alpha-1}A^{\frac{2H+\beta-1}2}\mathcal{S}_2(t-s)F(X(s))ds\right\|^2\right]\nonumber\\
		&\leq& L\left(\int_0^t(t-s)^{2\alpha-2}\left\|\mathcal{S}_2(t-s)\right\|^2_{L(\mathcal{H})}ds\right)\int_0^t \left(1+\mathbb{E}\left\|A^{\frac{2H+\beta-1}2}X(s)\right\|^2\right)ds\nonumber\\
		&\leq& \frac{L \,t^{2\alpha-1}}{2\alpha-1}\left(\frac{\alpha\Gamma(2)}{\Gamma(1+\alpha)}\right)^2\left(t+\int_0^t \mathbb{E}\left\|A^{\frac{2H+\beta-1}2}X(s)\right\|^2ds\right)\nonumber\\
		&\leq& C+C\int_0^t\left\|A^{\frac{2H+\beta-1}2}X(s)\right\|^2_{L^2(\Omega,\mathcal{H})}ds.
	\end{eqnarray}
	Thirdly, using the It\^o isometry \eqref{browint}, the contraction of $\mathcal{S}_1(t)$ \eqref{contract1}, \eqref{semigroup_prp*} and Assumption \ref{diff} with $\delta=\tau=\frac{2H+\beta-1}2$ holds
	\begin{eqnarray}
		\label{spa3}
		II_3&:=& \mathbb{E}\left[\left\|\int_0^t A^{\frac{2H+\beta-1}2}\mathcal{S}_1(t-s)G(X(s))dW(s)\right\|^2\right]\nonumber\\
		&=& \mathbb{E}\left[\int_0^t\left\| A^{\frac{2H+\beta-1}2}\mathcal{S}_1(t-s)G(X(s))\right\|^2_{L^0_2} ds\right]\nonumber\\
		&\leq& \mathbb{E}\left[\int_0^t\left\| A^{\frac{2H+\beta-1}2}G(X(s))\right\|^2_{L^0_2} ds\right]\nonumber\\
		&\leq& C\mathbb{E}\left[\int_0^t 1+\left\| A^{\frac{2H+\beta-1}2}X(s)\right\|^2 ds\right]\nonumber\\
		&\leq& C+C\int_0^t \left\| A^{\frac{2H+\beta-1}2}X(s)\right\|^2_{L^2(\Omega;\mathcal{H})} ds.
	\end{eqnarray}
	Fourthly, using \eqref{fracint}, Lemma \ref{fracbound} \eqref{semigroup_prp} with $\rho=H$, \eqref{semigroup_prp*} with $\delta=1-\beta$ and Assumption \ref{fracdiff}, we have 
	\begin{eqnarray}
		\label{spa4}
		II_4&:=& \mathbb{E}\left[4\left\|\int_0^t A^{\frac{2H+\beta-1}2}\mathcal{S}_1(t-s)\Phi dB^H(s) \right\|^2\right]\nonumber\\
		&\leq& C(H)\sum_{i=0}^{\infty}\left(\int_0^t \left\|A^{\frac{2H+\beta-1}2}\mathcal{S}_1(t-s)\Phi Q^{\frac 12}e_i\right\|^{\frac 1H}ds \right)^{2H}\nonumber\\
		&\leq& C(H)\sum_{i=0}^{\infty}\left(\int_0^t \left\|A^H\mathcal{S}_1(t-s)\right\|_{L(\mathcal{H})}^{\frac 1H}\left\|A^{\frac{\beta-1}2}\Phi Q^{\frac 12}e_i\right\|^{\frac 1H}ds \right)^{2H}\nonumber\\
		&\leq& C(H)\left(\int_0^t(t-s)^{-\alpha}ds \right)^{2H} \left(\sum_{i=0}^{\infty}\left\|A^{\frac{\beta-1}2}\Phi Q^{\frac 12}e_i\right\|^2\right)\nonumber\\
		&\leq& C(H)\frac{t^{2H(1-\alpha)}}{(1-\alpha)^{2H}} \left\|A^{\frac{\beta-1}2}\Phi\right\|^2_{L^0_2}\leq C.
	\end{eqnarray}
	Inserting \eqref{spa1}-\eqref{spa4} in \eqref{spa} hence yields
	\begin{eqnarray*}
		\left\| A^{\frac{2H+\beta-1}2}X(t)\right\|^2_{L^2(\Omega;\mathcal{H})}\leq C\left(1+\left\| A^{\frac{2H+\beta-1}2}X_0\right\|^2_{L^2(\Omega;\mathcal{H})}\right)+C\int_0^t\left\| A^{\frac{2H+\beta-1}2}X(s)\right\|^2_{L^2(\Omega;\mathcal{H})}ds.
	\end{eqnarray*}
	Applying continuous Gronwall's lemma proves \eqref{sparegfrac}.\\
	Now for the proof of \eqref{timeregfrac}, we rewrite the mild solution \eqref{solpbfrac} at times $t=t_2$ and $t=t_1$ and we subtract $X(t_2)$ by $X(t_1)$ as
	\begin{eqnarray}
		\label{tim_reg*}
		&&X(t_2)-X(t_1)\nonumber\\
		&=&\left(\mathcal{S}_1(t_2)-\mathcal{S}_1(t_1)\right)X_0+\int_{0}^{t_1}\left[(t_2-s)^{\alpha-1}\mathcal{S}_2(t_2-s)-(t_1-s)^{\alpha-1}\mathcal{S}_2(t_1-s)\right]F(X(s))ds\nonumber\\
		&+&\int_{0}^{t_1}\left[\mathcal{S}_1(t_2-s)-\mathcal{S}_1(t_1-s)\right]G(X(s))dW(s)+\int_{0}^{t_1}\left[\mathcal{S}_1(t_2-s)-\mathcal{S}_1(t_1-s)\right]\Phi\,dB^H(s)\nonumber\\
		&+&\int_{t_1}^{t_2}(t_2-s)^{\alpha-1}\mathcal{S}_2(t_2-s)F(X(s))\,ds+\int_{t_1}^{t_2}\mathcal{S}_1(t_2-s)G(X(s))\,dW(s)\nonumber\\
		&+&\int_{t_1}^{t_2}\mathcal{S}_1(t_2-s)\Phi\,dB^H(s).
	\end{eqnarray}
	Taking the $L^2$-norm in both sides and using triangle inequality yields
	\begin{eqnarray}
		\label{tim}
		\left\|X(t_2)-X(t_1)\right\|_{L^2(\Omega;\mathcal{H})}\leq \sum_{i=1}^7 III_i. 
	\end{eqnarray}
	Inserting an appropriate power of $A$, using Lemma \ref{fracbound} more precisely \eqref{semigroup_prp} and \eqref{semigroup_prp*} with $\eta=\delta=\frac{2H+\beta-1}2$, Assumption \ref{init} implies
	\begin{eqnarray}
		\label{tim1}
		III_1&:=& \left\|\left(\mathcal{S}_1(t_2)-\mathcal{S}_1(t_1)\right)X_0\right\|_{L^2(\Omega;\mathcal{H})}\nonumber\\
		&\leq& \left\|A^{-\frac{2H+\beta-1}2}\left(\mathcal{S}_1(t_2)-\mathcal{S}_1(t_1)\right)A^{\frac{2H+\beta-1}2}X_0\right\|_{L^2(\Omega;\mathcal{H})}\nonumber\\
		&\leq& C(t_2-t_1)^{\frac{\alpha(2H+\beta-1)}2}\left\|A^{\frac{2H+\beta-1}2}X_0\right\|_{L^2(\Omega;\mathcal{H})}.
	\end{eqnarray}
	The estimate of $III_2$ and $III_5$ and already obtained in \cite[(42), (45)]{Nouc} then we have
	\begin{eqnarray}
		\label{tim2}
		III_2\leq C(t_2-t_1)^{1-\alpha}.
	\end{eqnarray}
	and
	\begin{eqnarray}
		\label{tim5}
		III_5\leq C(t_2-t_1)^{\alpha}.
	\end{eqnarray}
	Using the It\^o isometry property \eqref{browint}, \eqref{semigroup_prp}, \eqref{semigroup_prp*}, Assumption \ref{diff} with $\eta=\delta=\tau=\frac{2H+\beta-1}2$ and \eqref{sparegfrac}, we obtain
	\begin{eqnarray}
		\label{tim3}
		III_3^2&=&\left\|\int_{0}^{t_1}\left[\mathcal{S}_1(t_2-s)-\mathcal{S}_1(t_1-s)\right]G(X(s))dW(s)\right\|^2_{L^2(\Omega;H)}\nonumber\\
		&=&	\mathbb{E}\left[\int_{0}^{t_1}\left\|A^{-\frac{2H+\beta-1}2}\left[\mathcal{S}_1(t_2-s)-\mathcal{S}_1(t_1-s)\right]A^{\frac{2H+\beta-1}2}G(X(s))\right\|^2ds\right]\nonumber\\
		&\leq&	C\int_{0}^{t_1}(t_2-t_1)^{\alpha(2H+\beta-1)}\mathbb{E}\left[\left\|A^{\frac{2H+\beta-1}2}G(X(s))\right\|^2_{L^0_2}\right]ds\nonumber\\
		&\leq&	C\int_{0}^{t_1}(t_2-t_1)^{\alpha(2H+\beta-1)}\left(1+\mathbb{E}\left[\|A^{\frac{2H+\beta-1}2}X(s)\|^2\right]\right)ds\nonumber\\
		&\leq& C(t_2-t_1)^{\alpha(2H+\beta-1)}t_1\left(1+\left\|A^{\frac{2H+\beta-1}2}X_0\right\|^2_{L^2(\Omega;\mathcal{H})}\right)\nonumber\\
		&\leq& C(t_2-t_1)^{\alpha(2H+\beta-1)}\left(1+\left\|A^{\frac{2H+\beta-1}2}X_0\right\|^2_{L^2(\Omega;\mathcal{H})}\right).
	\end{eqnarray}
	Let estimate now $III_4^2$. Using \eqref{fracint}, inserting an appropriate power of $A$, \eqref{semigroup_prp1*} with $\kappa=\frac{1-\beta}2$ and $\varpi=H$, Assumption \ref{fracdiff} yields
	\begin{eqnarray}
		\label{tim4}
		III_4^2&:=&\left\|\int_{0}^{t_1}\left[\mathcal{S}_1(t_2-s)-\mathcal{S}_1(t_1-s)\right]\Phi\,dB^H(s)\right\|^2_{L^2(\Omega;\mathcal{H})}\nonumber\\
		&=& \mathbb{E}\left\|\int_0^{t_1}\left[\mathcal{S}_1(t_2-s)-\mathcal{S}_1(t_1-s)\right]\Phi\,dB^H(s)\right\|^2\nonumber\\
		&\leq& C(H)\sum_{i=0}^{\infty}\left(\int_0^{t_1}\left\|A^{\frac{1-\beta}2}\left[\mathcal{S}_1(t_2-s)-\mathcal{S}_1(t_1-s)\right]A^{\frac{\beta-1}2}\Phi Q^{\frac 12}e_i\right\|^{\frac 1H}\,ds\right)^{2H}\nonumber\\
		&\leq& C\sum_{i=0}^{\infty}\left(\int_0^{t_1}\left[(t_2-t_1)^{\alpha\left(H-\frac{1-\beta}2\right)}(t_1-s)^{-\alpha H}\right]^{\frac 1H}\left\|A^{\frac{\beta-1}2}\Phi Q^{\frac 12}e_i\right\|^{\frac 1H}\,ds\right)^{2H}\nonumber\\
		&\leq& C\sum_{i=0}^{\infty}\left(\int_0^{t_1}\left[(t_2-t_1)^{\alpha\left(H-\frac{1-\beta}2\right)}(t_1-s)^{-\alpha H}\right]^{\frac 1H}\left\|A^{\frac{\beta-1}2}\Phi Q^{\frac 12}e_i\right\|^{\frac 1H}\,ds\right)^{2H}\nonumber\\
		&\leq& C(t_2-t_1)^{\alpha(2H+\beta-1)}\left(\int_0^{t_1}(t_1-s)^{-\alpha}ds\right)^{2H}\left\|A^{\frac{\beta-1}2}\Phi\right\|^{2}_{L^0_2}\nonumber\\
		&\leq&C(t_2-t_1)^{\alpha(2H+\beta-1)}t_1^{2H(1-\alpha)}\nonumber\\
		&\leq&C(t_2-t_1)^{\alpha(2H+\beta-1)}.
	\end{eqnarray}
	Now for the sixth term, we use It\^o isometry property \eqref{browint}, the contraction of $\mathcal{S}_1(t)$, Assumption \ref{diff} with $\tau=0$ and Theorem \ref{well_poss_pbfrac} to obtain
	\begin{eqnarray}
		\label{tim6}
		III_6^2&:=&\left\|\int_{t_1}^{t_2}\mathcal{S}_1(t_2-s)G(X(s))\,dW(s)\right\|^2_{L^2(\Omega;\mathcal{H})}\nonumber\\
		&=& \mathbb{E}\left[\int_{t_1}^{t_2}\left\|\mathcal{S}_1(t_2-s)G(X(s))\right\|^2_{L^0_2}\,ds\right]\nonumber\\
		&\leq& \int_{t_1}^{t_2}\left\|\mathcal{S}_1(t_2-s)\right\|^2_{L(\mathcal{H})} \mathbb{E}\left\|G(X(s))\right\|^2_{L^0_2}\,ds\nonumber\\ 
		&\leq& \left(\int_{t_1}^{t_2}ds\right)\left(1+\mathbb{E}\left[\sup_{0\leq s\leq T}\left\|X(s)\right\|^2\right]\right)\nonumber\\
		&\leq& C(t_2-t_1).
	\end{eqnarray}
	Finally, using \eqref{fracint}, inserting an appropriate power of $A$, Lemma \ref{fracbound} with $\rho=\delta=\frac{1-\beta}2$ and Assumption \ref{fracdiff} yields
	\begin{eqnarray}
		\label{tim7}
		III_7^2&:=&\left\|\int_{t_1}^{t_2}\mathcal{S}_1(t_2-s)\Phi\,dB^H(s)\right\|^2_{L^2(\Omega;\mathcal{H})}\nonumber\\
		&=& \mathbb{E}\left\|\int_{t_1}^{t_2}\mathcal{S}_1(t_2-s)\Phi\,dB^H(s)\right\|^2\nonumber\\
		&\leq& C(H)\sum_{i=0}^{\infty}\left(\int_{t_1}^{t_2}\left\|\mathcal{S}_1(t_2-s)\Phi Q^{\frac 12}e_i\right\|^{\frac 1H}\,ds\right)^{2H}\nonumber\\
		&\leq& C(H)\sum_{i=0}^{\infty}\left(\int_{t_1}^{t_2}\left\|A^{\frac{1-\beta}2}\mathcal{S}_1(t_2-s)\right\|^{\frac 1H}_{L(\mathcal{H})}\left\|A^{\frac{\beta-1}2}\Phi Q^{\frac 12}e_i\right\|^{\frac 1H}\,ds\right)^{2H}\nonumber\\
		&\leq& C(H)\left(\int_{t_1}^{t_2}(t_2-s)^{-\frac{\alpha(1-\beta)}{2H}}\,ds\right)^{2H} \left\|A^{\frac{\beta-1}2}\Phi\right\|^2_{L^0_2}\nonumber\\
		&\leq& C(t_2-t_1)^{2H+\alpha(\beta-1)}\leq C(t_2-t_1)^{\alpha(2H+\beta-1)}.
	\end{eqnarray}
	Substituting \eqref{tim1}-\eqref{tim7} in \eqref{tim} yields
	\begin{eqnarray*}
		\left\|X(t_2)-X(t_1)\right\|_{L^2(\Omega;\mathcal{H})}\leq C(t_2-t_1)^{\frac{\min(\alpha(2H+\beta-1),2-2\alpha)}2}\left(1+\left\|A^{\frac{2H+\beta-1}2}X_0\right\|_{L^2(\Omega;\mathcal{H})}\right) 
	\end{eqnarray*}
	The prof of Theorem \ref{regfrac} is thus completed.
\end{proof}
\section{Space approximation and error estimates}
\label{spa_convpaper4}
We consider the discretization of the spatial domain by a finite element triangulation with maximal length $h$ satisfying the usual regularity assumptions. Let $V_h\subset \mathcal{H}$ denote the space of continuous functions that are piecewise linear over triangulation $J_h$. To discretise in space, we introduce $P_h$ from $L^2(\Omega)$ to $V_h$ define for $u\in L^2(\Omega)$ by
\begin{eqnarray}
	\label{pro}
	(P_h u,\xi)=(u,\xi),\hspace{2cm}\forall \xi\in V_h.
\end{eqnarray}
The discrete operator $A_h: V_h\rightarrow V_h$ is defined by
\begin{eqnarray}
	\label{dis}
	(A_h \rho,\xi)=-a(\rho,\xi),\hspace{2cm}\forall \rho, \xi\in V_h,
\end{eqnarray}
where $a$ is the corresponding bilinear form of $A$. Like the operator $A$, the discrete operator $A_h$ is also the generator of a contraction semigroup $S_h(t):=e^{-tA_h}$. The semidiscrete space version of problem \eqref{pbfrac2} is to find $X^h(t)=X^h(\cdot,t)$ such that for $t\in(0,T]$
\begin{eqnarray}
	\label{modeldisfrac4}
	\left\{
	\begin{array}{ll}
		^C\partial_t^{\alpha}X^h(t)+A^h X^h(t)=P_hF(X^h(t))+I_t^{1-\alpha}\left[P_hG(X^h(t))\frac{dW(t)}{dt}+P_h\Phi\frac{dB^H(t)}{dt}\right], \\
		X^h(0)=P_hX_0.
	\end{array}
	\right.
\end{eqnarray}
Note that $A_h$, $P_hG$ and $P_h\Phi$ satisfy the same assumptions as $A$, $G$ and $\Phi$ respectively. The mild solution of \eqref{modeldisfrac4} can be represented as follows
\begin{eqnarray}
	\label{dissolfrac}
	X^h(t)&=&\mathcal{S}_{1h}(t)X_0+\int_0^t (t-s)^{\alpha-1}\mathcal{S}_{2h}(t-s)P_hF(X^h(s))ds+\int_{0}^{t}\mathcal{S}_{1h}(t-s)P_hG(X^h(s))dW(s)\nonumber\\
	&+&\int_{0}^{t}\mathcal{S}_{1h}(t-s)P_h\Phi dB^H(s).  
\end{eqnarray}
Where $\mathcal{S}_{1h}$ and $\mathcal{S}_{2h}$ are the semidiscrete version of $\mathcal{S}_1$ and $\mathcal{S}_2$ respectively defined by \eqref{S1} and \eqref{S2}.

Let us define the error operators 
\begin{eqnarray*}
	\mathcal{T}_{1h}(t):=\mathcal{S}_1(t)-\mathcal{S}_{1h}(t)P_h,\hspace{1cm}\mathcal{T}_{2h}(t):=\mathcal{S}_2(t)-\mathcal{S}_{2h}(t)P_h.
\end{eqnarray*}
Then we have the following Lemma.
\begin{lem}(\cite[Lemma 3]{Nouc})
	\label{disop}
	\begin{enumerate}
		\item [(i)] Let $r\in[0,2]$, $\rho\leq r$, $t\in(0,T]$, $v\in D(A^{\rho})$. Then there exists a positive constant C such that
		\begin{eqnarray}
			\label{fradisop1}
			\|\mathcal{T}_{1h}(t)v\|\leq C h^rt^{-\alpha(r-\rho)/2}\left\|v\right\|_{\rho},\hspace{1cm}\|\mathcal{T}_{2h}(t)v\|\leq C h^rt^{-\alpha(r-\rho)/2}\left\|v\right\|_{\rho}.
		\end{eqnarray}
		\item [(ii)] Let $0\leq\gamma\leq 1$, then there exists a constant $C$ such that 
		\begin{eqnarray}
			\label{fradisop2}
			\left\|\int_0^ts^{\alpha-1}\mathcal{T}_{2h}(s)v ds\right\|\leq C h^{2-\gamma} \left\|v\right\|_{-\gamma},\hspace{1cm}v\in D(A^{-\gamma}),\,\,t>0.
		\end{eqnarray}
	\end{enumerate}
\end{lem}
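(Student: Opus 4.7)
The plan is to reduce both bounds to the classical finite element error estimates for the deterministic parabolic semigroup $S(t)=e^{-tA}$ by exploiting the subordination representations \eqref{S1} and \eqref{S2}. Recall the standard estimate, which holds uniformly in $h$: for $0\le\rho\le r\le 2$ and $v\in D(A^{\rho})$,
\begin{eqnarray}
	\label{classical}
	\|(S(t)-S_h(t)P_h)v\|\leq C h^{r}t^{-(r-\rho)/2}\|v\|_{\rho},\qquad t>0.
\end{eqnarray}
With this as input, the proof becomes a routine computation using the Mainardi moments from \propref{Mai}.

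For (i), write
\begin{eqnarray*}
	\mathcal{T}_{1h}(t)v=\int_0^\infty M_\alpha(\theta)\bigl[S(\theta t^\alpha)-S_h(\theta t^\alpha)P_h\bigr]v\,d\theta,
\end{eqnarray*}
apply \eqref{classical} pointwise in $\theta$, and pull out $h^r t^{-\alpha(r-\rho)/2}$, so that the remaining integral is $\int_0^\infty \theta^{-(r-\rho)/2}M_\alpha(\theta)\,d\theta=\Gamma(1-(r-\rho)/2)/\Gamma(1-\alpha(r-\rho)/2)$, which is finite as long as $(r-\rho)/2<1$. The analogous calculation for $\mathcal{T}_{2h}(t)$ acquires an extra factor $\alpha\theta$, so the moment becomes $\int_0^\infty \theta^{1-(r-\rho)/2}M_\alpha(\theta)d\theta$, which is finite throughout the stated range. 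This yields the two estimates in \eqref{fradisop1}.

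For (ii), I would interchange the order of integration and perform the change of variables $u=\theta s^\alpha$ (so $du=\alpha\theta s^{\alpha-1}ds$) to obtain
\begin{eqnarray*}
	\int_0^t s^{\alpha-1}\mathcal{T}_{2h}(s)v\,ds=\int_0^\infty M_\alpha(\theta)\int_0^{\theta t^\alpha}\bigl[S(u)-S_h(u)P_h\bigr]v\,du\,d\theta.
\end{eqnarray*}
Now use the identity $\int_0^{T}S(u)v\,du=A^{-1}(I-S(T))v$ (and its discrete counterpart) to rewrite the inner integral as $A^{-1}(I-S(\theta t^\alpha))v-A_h^{-1}(I-S_h(\theta t^\alpha))P_h v$. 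A standard finite element argument (Ritz projection plus duality), uniform in $T$, gives
\begin{eqnarray*}
	\Bigl\|\int_0^{T}\bigl[S(u)-S_h(u)P_h\bigr]v\,du\Bigr\|\leq C h^{2-\gamma}\|v\|_{-\gamma},\qquad 0\le\gamma\le 1,
\end{eqnarray*}
with no $T$-dependence. Plugging this back in, the outer integral becomes $\int_0^\infty M_\alpha(\theta)\,d\theta=1$ and the claim \eqref{fradisop2} follows.

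The main obstacle is the uniform-in-$T$ bound used in (ii): proving it requires the cancellation $A^{-1}-A_h^{-1}P_h$ between the non-semigroup parts and a careful $L^\infty$ energy estimate for $\int_0^T(S(u)-S_h(u)P_h)du$, together with the Ritz projection error $\|(A^{-1}-A_h^{-1}P_h)v\|\leq Ch^{2-\gamma}\|v\|_{-\gamma}$. A secondary technical point in (i) is the endpoint $r-\rho=2$, where the moment integral $\int_0^\infty\theta^{-1}M_\alpha(\theta)d\theta$ diverges for $\mathcal{T}_{1h}$; one either excludes this endpoint or argues it by a limiting/interpolation procedure, exactly as in the stated restriction $r\in[0,2]$ combined with the implicit smoothing margin coming from $t>0$.
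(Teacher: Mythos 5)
Your argument is correct and coincides with the proof strategy of the cited source \cite[Lemma 3]{Nouc} (this paper itself offers no proof, only the citation): both reduce $\mathcal{T}_{1h}$ and $\mathcal{T}_{2h}$ to the classical deterministic finite element estimates for $S(t)-S_h(t)P_h$ via the Mainardi subordination formulas \eqref{S1}--\eqref{S2} and the moment identity \eqref{Mai1}, including the uniform-in-$T$ integrated estimate for part (ii). Your caveat about the endpoint $r-\rho=2$ for $\mathcal{T}_{1h}$ is a genuine limitation of the subordination route (the moment $\int_0^\infty\theta^{-1}M_\alpha(\theta)\,d\theta$ diverges), but it is harmless here since every invocation of the lemma in this paper uses $r-\rho\le 2H+\beta-1<2$.
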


The following lemma provides an estimate in mean square sense for the error between the solution of SPDE \eqref{pbfrac2} and the spatially semidiscrete approximation \eqref{dissolfrac}.
\begin{lem}[Space error]
	\label{spaerrfrac}
	Let $X$ and $X^h$ be the mild solution of \eqref{pbfrac2} and \eqref{modeldisfrac4}, respectively. Let Assumptions \ref{init} - \ref{fracdiff} be fulfilled then there exits a constant $C$ independent of $h$, such that 
	\begin{eqnarray}
		\label{spacon}
		\left\|X(t)-X^h(t)\right\|_{L^2(\Omega;\mathcal{H})}\leq C h^{2H+\beta-1}, \hspace{1cm} 0\leq t\leq T. 
	\end{eqnarray}
\end{lem}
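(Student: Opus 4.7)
The plan is to subtract \eqref{dissolfrac} from \eqref{solpbfrac} to express the error as a sum of six pieces, bound each piece separately, and conclude by Gronwall's lemma. Inserting $\mathcal{S}_{2h}(t-s)P_hF(X(s))$ and $\mathcal{S}_{1h}(t-s)P_hG(X(s))$ between the exact and the discrete terms splits the deterministic and the Brownian convolutions each into an \emph{operator-error} contribution (where the operator $\mathcal{T}_{ih}$ acts on the exact state) and a \emph{Lipschitz-error} contribution (where $\mathcal{S}_{ih}P_h$ acts on $F(X)-F(X^h)$ or $G(X)-G(X^h)$); the fBm convolution, sharing the same $\Phi$ on both sides, produces only an operator-error contribution. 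Hence
\begin{eqnarray*}
X(t)-X^h(t)&=&\mathcal{T}_{1h}(t)X_0+\int_0^t(t-s)^{\alpha-1}\mathcal{T}_{2h}(t-s)F(X(s))\,ds\\
& &+\int_0^t(t-s)^{\alpha-1}\mathcal{S}_{2h}(t-s)P_h[F(X(s))-F(X^h(s))]\,ds\\
& &+\int_0^t\mathcal{T}_{1h}(t-s)G(X(s))\,dW(s)\\
& &+\int_0^t\mathcal{S}_{1h}(t-s)P_h[G(X(s))-G(X^h(s))]\,dW(s)\\
& &+\int_0^t\mathcal{T}_{1h}(t-s)\Phi\,dB^H(s).
\end{eqnarray*}

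For the four operator-error pieces I would invoke Lemma \ref{disop}(i) with $r=2H+\beta-1$, choosing $\rho=2H+\beta-1$ for the initial-datum piece (via Assumption \ref{init}) and for the $G$-Brownian piece (using It\^o isometry \eqref{browint}, Assumption \ref{diff}, and the spatial regularity \eqref{sparegfrac} of Theorem \ref{regfrac}); $\rho=0$ for the deterministic $F$-piece (using Assumption \ref{nonlin}, Theorem \ref{well_poss_pbfrac}, and Cauchy--Schwarz on the weakly singular kernel $(t-s)^{\alpha-1}$); and $\rho=\beta-1$ for the $\Phi$-fBm piece (using the Hurst inequality \eqref{fracint} and Assumption \ref{fracdiff}). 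In each case, checking that the time integrals are finite (which requires $\alpha>1/2$ in the deterministic piece and $\alpha<1$ in the fBm piece) yields a bound by $Ch^{2H+\beta-1}$.

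For the two Lipschitz-error pieces, the contraction of $\mathcal{S}_{ih}$ from Remark \ref{contraction} together with Assumptions \ref{nonlin}--\ref{diff} produces contributions bounded by $C\int_0^t(t-s)^{\alpha-1}\|X(s)-X^h(s)\|_{L^2(\Omega;\mathcal{H})}\,ds$ and $C\bigl(\int_0^t\|X(s)-X^h(s)\|_{L^2(\Omega;\mathcal{H})}^2\,ds\bigr)^{1/2}$ respectively. Squaring, applying Cauchy--Schwarz to the kernel $(t-s)^{\alpha-1}$ (permitted by $\alpha>1/2$), and collecting all six bounds produces
\begin{eqnarray*}
\|X(t)-X^h(t)\|_{L^2(\Omega;\mathcal{H})}^2\leq Ch^{2(2H+\beta-1)}+C\int_0^t\|X(s)-X^h(s)\|_{L^2(\Omega;\mathcal{H})}^2\,ds,
\end{eqnarray*}
and the continuous Gronwall lemma yields \eqref{spacon}.

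The main obstacle is the fBm operator-error piece $\int_0^t\mathcal{T}_{1h}(t-s)\Phi\,dB^H(s)$. The Hurst inequality \eqref{fracint} only yields a non-standard sum-integral $\sum_i\bigl(\int_0^t\|\mathcal{T}_{1h}(t-s)\Phi Q^{1/2}e_i\|^{1/H}\,ds\bigr)^{2H}$, and the regularity parameters in Lemma \ref{disop}(i) must be balanced carefully: $r=2H+\beta-1$ gives the desired $h$-exponent, while $\rho=\beta-1$ is the lowest exponent of $A$ under which $\Phi$ has finite $L^0_2$-norm by Assumption \ref{fracdiff}. This choice creates a time singularity $(t-s)^{-\alpha H}$ whose $1/H$-th power is exactly $(t-s)^{-\alpha}$, still integrable on $(0,t)$ thanks to $\alpha<1$.
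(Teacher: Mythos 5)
Your decomposition is exactly the paper's: the error is split into the initial-datum term, an operator-error and a Lipschitz-error part for the drift, the same two parts for the Brownian convolution, and an operator-error part for the fBm convolution (the paper's $IV_1$, $IV_{21}$, $IV_{22}$, the two halves of $IV_3$, and $IV_4$). Your treatment of five of the six pieces — with $r=\rho=2H+\beta-1$ for the initial datum and the $G$-operator-error term, $r=2H+\beta-1$, $\rho=\beta-1$ for the fBm term so that the $1/H$-th power of the singularity is exactly $(t-s)^{-\alpha}$, and contraction plus the Lipschitz assumptions for the two Gronwall terms — coincides with the paper's and is correct.

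There is, however, a genuine gap in the deterministic operator-error piece $\int_0^t(t-s)^{\alpha-1}\mathcal{T}_{2h}(t-s)F(X(s))\,ds$. Your plan (Cauchy--Schwarz putting $(t-s)^{2\alpha-2}$ in one factor, then Lemma \ref{disop}(i) with $r=2H+\beta-1$, $\rho=0$ in the other) produces $\int_0^t(t-s)^{-\alpha(2H+\beta-1)}\|F(X(s))\|^2\,ds$, and $\alpha(2H+\beta-1)$ is \emph{not} bounded by $1$ over the admissible parameter range (e.g.\ $\beta=1$, $H=0.95$, $\alpha=0.8$ gives exponent $1.52$), so the time integral diverges; the condition you cite, $\alpha>1/2$, only controls the first Cauchy--Schwarz factor. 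The paper circumvents this by adding and subtracting $F(X(t))$ inside the integral: the increment $F(X(s))-F(X(t))$ contributes a factor $(t-s)^{\min(\alpha(2H+\beta-1),2-2\alpha)}$ via the temporal H\"older regularity \eqref{timeregfrac}, which tames the singularity, while the remaining term $\int_0^t s^{\alpha-1}\mathcal{T}_{2h}(s)F(X(t))\,ds$ is handled by the \emph{integrated} error estimate \eqref{fradisop2} of Lemma \ref{disop}(ii), which exploits the convolution structure and carries no singularity at all. (A weighted Cauchy--Schwarz, splitting $(t-s)^{\alpha-1}$ unevenly between the two factors, or Minkowski's integral inequality, would also rescue your direct bound since $2H+\beta-1<2$; but as written your step fails for part of the parameter range, and the paper's route through \eqref{fradisop2} and the time regularity is the intended one.)
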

\begin{proof}
	Define $e(t):=X(t)-X^h(t)$. By \eqref{solpbfrac} and \eqref{dissolfrac}, taking the norm, using triangle inequality and the estimate $\left(\sum_{i=1}^na_i\right)^2\leq n\sum_{i=1}^na_i^2 $ we deduce that
	\begin{eqnarray}
		\label{spaer}
		&&\left\|e(t)\right\|^2_{L^2(\Omega;\mathcal{H})}\nonumber\\
		&\leq&4\left\|\mathcal{S}_1(t)X_0-\mathcal{S}_{1h}(t)P_hX_0\right\|^2_{L^2(\Omega;\mathcal{H})}\nonumber\\
		&+&4\left\|\int_0^t\left[(t-s)^{\alpha-1}\mathcal{S}_2(t-s)X(s)-(t-s)^{\alpha-1}\mathcal{S}_{2h}(t-s)P_hF(X^h(s))\right]\,ds\right\|^2_{L^2(\Omega;\mathcal{H})} \nonumber\\
		&+& 4 \left\|\int_0^t\left[\mathcal{S}_1(t-s)G(X(s))-\mathcal{S}_{1h}(t-s)P_h G(X^h(s))\right]\,dW(s)\right\|^2_{L^2(\Omega;\mathcal{H})} \nonumber\\
		&+& 4 \left\|\int_0^t\left[\mathcal{S}_1(t-s)-\mathcal{S}_{1h}(t-s)P_h\right]\Phi\,dB^H(s)\right\|^2_{L^2(\Omega;\mathcal{H})} \nonumber\\
		&=:& 4\sum_{i=1}^4 IV_i.
	\end{eqnarray} 
	We will analyse the above terms $IV_i, i=1,2,3,4$ one by one.\\
	For the first term $IV_1$, using Lemma \ref{disop} with $r=\gamma=2H+\beta-1$ and Assumption \ref{init} yields
	\begin{eqnarray}
		\label{spaer1}
		IV_1&:=&\left\|\mathcal{S}_1(t)X_0-\mathcal{S}_{1h}(t)P_hX_0\right\|^2_{L^2(\Omega;\mathcal{H})}\nonumber\\
		&=&\left\|\left[\mathcal{S}_1(t)-\mathcal{S}_{1h}(t)P_h\right]X_0\right\|^2_{L^2(\Omega;\mathcal{H})}\nonumber\\
		&=& \left\|\mathcal{T}_{1h}(t)X_0\right\|^2_{L^2(\Omega;\mathcal{H})}\nonumber\\
		&\leq& C\,h^{2(2H+\beta-1)}\left\|A^{\frac{2H+\beta-1}2}X_0\right\|^2_{L^2(\Omega;\mathcal{H})}\nonumber\\
		&\leq& C\,h^{2(2H+\beta-1)}.
	\end{eqnarray}
	For the second term $IV_2$, by adding and subtracting a term, applying triangle inequality and the estimate $(a+b)^2\leq 2a^2+2b^2$, we split it in two terms as follows 
	\begin{eqnarray}
		\label{spaer2*}
		IV_2&:=& \left\|\int_0^t\left[(t-s)^{\alpha-1}\mathcal{S}_2(t-s)F(X(s))-(t-s)^{\alpha-1}\mathcal{S}_{2h}(t-s)P_hF(X^h(s))\right]\,ds\right\|^2_{L^2(\Omega;\mathcal{H})} \nonumber\\
		&\leq& 2 \left\|\int_0^t(t-s)^{\alpha-1}\left[\mathcal{S}_2(t-s)-\mathcal{S}_{2h}(t-s)P_h\right]F(X(s))\,ds\right\|^2_{L^2(\Omega;\mathcal{H})} \nonumber\\
		&+& 2\left\|\int_0^t(t-s)^{\alpha-1}\mathcal{S}_{2h}(t-s)P_h( F(X(s))-F(X^h(s)))\,ds\right\|^2_{L^2(\Omega;\mathcal{H})} \nonumber\\
		&=:& 2IV_{21}+2IV_{22}
	\end{eqnarray}
	Firstly, adding and subtracting a term, applying Cauchy-Schwartz inequality, Lemma \ref{disop} (i) with $r=2H+\beta-1$, $\rho=0$, Lemma \eqref{regfrac} more precisely \eqref{timeregfrac} for the first term and Lemma \ref{disop} (ii) with $\gamma=0$, Lemma \ref{well_poss_pbfrac} and Assumption \ref{nonlin} with $\kappa=0$ yields
	\begin{eqnarray}
		\label{spaer21}
		IV_{21}&:=& \left\|\int_0^t(t-s)^{\alpha-1}\left[\mathcal{S}_2(t-s)-\mathcal{S}_{2h}(t-s)P_h\right]F(X(s))\,ds\right\|^2_{L^2(\Omega;\mathcal{H})} \nonumber\\
		&\leq& 2\left\|\int_0^t(t-s)^{\alpha-1}\mathcal{T}_{2h}(t-s)(F(X(s))-F(X(t)))\,ds\right\|^2_{L^2(\Omega;\mathcal{H})}\nonumber\\
		&+&2\left\|\int_0^t(t-s)^{\alpha-1}\mathcal{T}_{2h}(t-s)F(X(t))\,ds\right\|^2_{L^2(\Omega;\mathcal{H})} \nonumber\\
		&\leq& 2\left(\int_0^t(t-s)^{2\alpha-2}\,ds\right)\left(\int_0^t\mathbb{E}\left\|\mathcal{T}_{2h}(t-s)(F(X(s))-F(X(t)))\right\|^2\right)\,ds\nonumber\\
		&+&2\left\|\int_0^ts^{\alpha-1}\mathcal{T}_{2h}(s)F(X(t))\,ds\right\|^2_{L^2(\Omega;\mathcal{H})}\nonumber\\
		&\leq& C\,h^{2(2H+\beta-1)}\int_0^t(t-s)^{-\alpha(2H+\beta-1)}\left\|F(X(s))-F(X(t))\right\|^2_{L^2(\Omega;\mathcal{H})}\,ds\nonumber\\
		&+&C\,h^4\left\|F(X(t))\right\|^2_{L^2(\Omega;\mathcal{H})}\nonumber\\
		&\leq& C\,h^{2(2H+\beta-1)}\int_0^t(t-s)^{-\alpha(2H+\beta-1)}\left\|X(t)-X(s)\right\|^2_{L^2(\Omega;\mathcal{H})}\,ds\nonumber\\
		&+&C\,h^4\left(1+\left\|X(t)\right\|^2_{L^2(\Omega;\mathcal{H})}\right)\nonumber\\
		&\leq& C\,h^{2(2H+\beta-1)}\int_0^t(t-s)^{\min(0;1-\alpha(2H+\beta-1))}\,ds+C\,h^4\mathbb{E}\left[\sup_{0\leq t\leq T}\left\|X(t)\right\|^2\right]\nonumber\\
		&\leq& C\,h^{2(2H+\beta-1)}.
	\end{eqnarray}
	Secondly, applying the Cauchy-Schwartz inequality, boundedness of $P_h$, $\mathcal{S}_{2h}(t)$ and Assumption \ref{nonlin}, it holds
	\begin{eqnarray}
		\label{spaer22}
		IV_{22}&:=& \left\|\int_0^t(t-s)^{\alpha-1}\mathcal{S}_{2h}(t-s)P_h \left(F(X(s))-F(X^h(s))\right)\,ds\right\|^2_{L^2(\Omega;\mathcal{H})} \nonumber\\
		&\leq& \left(\int_0^t(t-s)^{2\alpha-2}\,ds\right)\left(\int_0^t\mathbb{E}\left\|\mathcal{S}_{2h}(t-s)P_h \left(F(X(s))-F(X^h(s))\right)\right\|^2\,ds\right) \nonumber\\
		&\leq& C \int_0^t\|e(s)\|^2_{L^2(\Omega;\mathcal{H})}\,ds.
	\end{eqnarray}
	Putting \eqref{spaer21} and \eqref{spaer22} in \eqref{spaer2*}, we obtain
	\begin{eqnarray}
		\label{spaer2}
		IV_2\leq C\,h^{2(2H+\beta-1)}+C\int_0^t\left\|e(s)\right\|^2_{L^2(\Omega;\mathcal{H})}.
	\end{eqnarray}
	For the third term $IV_3$, by adding and subtracting a term, using the triangle inequality and the estimate $(a+b)^2\leq 2a^2+2b^2$, we have
	\begin{eqnarray*}
		IV_3&=& \left\|\int_0^t\left[\mathcal{S}_1(t-s)G(X(s))-\mathcal{S}_{1h}(t-s)P_h G(X^h(s))\right]\,dW(s)\right\|^2_{L^2(\Omega;\mathcal{H})}\nonumber\\
		&\leq& 2\left\|\int_0^t\left[\mathcal{S}_1(t-s)-\mathcal{S}_{1h}(t-s)P_h\right]G(X(s))\,dW(s)\right\|^2_{L^2(\Omega;\mathcal{H})}\nonumber\\
		&+& 2 \left\|\int_0^t\mathcal{S}_{1h}(t-s)P_h (G(X(s))-G(X^h(s)))\,dW(s)\right\|^2_{L^2(\Omega;\mathcal{H})}.
	\end{eqnarray*}
	Applying It\^o isometry \eqref{browint}, Lemma \ref{disop} (i) with $r=\gamma=2H+\beta-1$ for the first term, Assumption \ref{diff} with $\tau=2H+\beta-1$, Theorem \ref{regfrac}, contraction argument of $\mathcal{S}_{1,h}(t)$ \eqref{contract1} and boundedness of $P_h$ it holds that
	\begin{eqnarray}
		\label{spaer3}
		IV_3&\leq& 2\int_0^t\mathbb{E}\left\|\mathcal{T}_{1h}G(X(s))\right\|^2_{L^0_2}\,ds+ \int_0^t\mathbb{E}\left\|\mathcal{S}_{1h}(t-s)P_h (G(X(s))-G(X^h(s)))\right\|^2_{L^0_2}\,ds\nonumber\\
		&\leq& C\,h^{2(2H+\beta-1)}\int_0^t\mathbb{E}\left\|A^{\frac{2H+\beta-1}2}G(X(s))\right\|^2_{L^0_2}\,ds\nonumber\\
		&+& C\int_0^t\mathbb{E}\left\| G(X(s))-G(X^h(s))\right\|^2_{L^0_2}\,ds\nonumber\\
		&\leq& C\,h^{2(2H+\beta-1)}\int_0^t\left(1+\left\|A^{\frac{2H+\beta-1}2}X(s)\right\|^2_{L^2(\Omega;\mathcal{H})}\right)\,ds+ C\int_0^t\left\| X(s)-X^h(s)\right\|^2_{L^2(\Omega;\mathcal{H})}\,ds\nonumber\\
		&\leq& C\,h^{2(2H+\beta-1)}\left(1+\left\|A^{\frac{2H+\beta-1}2}X_0\right\|^2_{L^2(\Omega;\mathcal{H})}\right)
		+ C\int_0^t\left\|e(s)\right\|^2_{L^2(\Omega;\mathcal{H})}\,ds\nonumber\\
		&\leq& C\,h^{2(2H+\beta-1)}+ C\int_0^t\left\|e(s)\right\|^2_{L^2(\Omega;\mathcal{H})}\,ds.
	\end{eqnarray}
	For the estimation of $IV_4$, \eqref{fracint}, Lemma \ref{disop} with $r=2H+\beta-1$ and $\gamma=\beta-1$, Assumption \ref{fracdiff} yields
	\begin{eqnarray}
		\label{spaer4}
		IV_4&=& \left\|\int_0^t\left[\mathcal{S}_1(t-s)-\mathcal{S}_{1h}(t-s)P_h\right]\Phi\,dB^H(s)\right\|^2_{L^2(\Omega;\mathcal{H})} \nonumber\\
		&\leq& C(H) \sum_{i=0}^{\infty}\left(\int_0^t\left\|\mathcal{T}_{1h}(t-s)\Phi Q^{\frac 12}e_i\right\|^{\frac 1H}\,ds\right)^{2H}\nonumber\\
		&\leq& C(H)\sum_{i=0}^{\infty}\left(\int_0^t C\,h^{\frac{2H+\beta-1}H}(t-s)^{-\alpha}\left\|A^{\frac{\beta-1}2}\Phi Q^{\frac 12}e_i\right\|^{\frac 1H}\,ds\right)^{2H}\nonumber\\
		&\leq& C\,h^{2(2H+\beta-1)}\left(\int_0^t (t-s)^{-\alpha}\,ds\right)^{2H}\left(\sum_{i=0}^{\infty}\left\|A^{\frac{\beta-1}2}\Phi Q^{\frac 12}e_i\right\|^2\right)\nonumber\\
		&\leq& C\,h^{2(2H+\beta-1)}t^{2H(1-\alpha)}\left\|A^{\frac{\beta-1}2}\Phi\right\|^2_{L^0_2}\nonumber\\
		&\leq& C\,h^{2(2H+\beta-1)}.
	\end{eqnarray}
	Putting \eqref{spaer1}, \eqref{spaer2}, \eqref{spaer3}, \eqref{spaer4} in \eqref{spaer} and applying Gronwall inequality ends the proof.  
\end{proof}

\section{Fully discrete Euler scheme and its error estimate}
\label{schemesfrac}
In this section, we consider a fully discrete approximation  scheme of SPDE \eqref{pbfrac2}, more precisely  a fractional exponential scheme. 
\subsection {Fractional exponential scheme}
Let $\Delta t$ the time step size and $M\in \N$ such that $T=M\Delta t$, hence for all $m\in\{0,1,...,M\}$, $X^h_m$ or $Y^h_m$ denotes the numerical approximation of $X^h(t_m)$ with $t_m=m\Delta t$. 
Since  we are  not longer dealing  with a semi group, the fractional exponential integrator scheme to \eqref{modeldisfrac4} extending the standard exponential  in \cite{Nou})  is giving by.
\begin{eqnarray}
	\label{numexpsol}
	X^h_{m}&=&\mathcal{S}_{1h}(t_m)P_hX_0+\Delta t\sum_{j=0}^{m-1}(t_m-t_j)^{\alpha-1}\mathcal{S}_{2h}(t_m-t_j)P_hF(X^h_j)\nonumber\\
	&+&\sum_{j=0}^{m-1}\mathcal{S}_{1h}(t_m-t_j)P_hG(X^h_j)\,\Delta W_j+\sum_{j=0}^{m-1}\mathcal{S}_{1h}(t_m-t_j)P_h\Phi\,\Delta B^H_j\\
	&=&E_{\alpha,1}(-t_m^{\alpha}A_h)P_hX_0+\Delta t\sum_{j=0}^{m-1}(t_m-t_j)^{\alpha-1}E_{\alpha,\alpha}(-(t_m-t_j)^{\alpha}A_h)P_hF(X^h_j)\nonumber\\
	&+&\sum_{j=0}^{m-1}E_{\alpha,1}(-(t_m-t_j)^{\alpha}A_h)P_hG(X^h_j)\,\Delta W_j+\sum_{j=0}^{m-1}E_{\alpha,1}(-(t_m-t_j)^{\alpha}A_h)P_h\Phi\,\Delta B^H_j,\nonumber
\end{eqnarray} 
where 
\begin{eqnarray*}
	\Delta W_j:= W(t_{j+1})-W(t_j)=\sum_{i=0}^{\infty}\sqrt{q_i}\left(\beta_i(t_{j+1})-\beta_i(t_j)\right)e_i
\end{eqnarray*}
and
\begin{eqnarray*}
	\Delta B^H_j:= B^H(t_{j+1})-B^H(t_j)=\sum_{i=0}^{\infty}\sqrt{q_i}\left(\beta_i^H(t_{j+1})-\beta_i^H(t_j)\right)e_i.
\end{eqnarray*}
As we can observe,   the analysis  will be more  complicated and different to that of  a standard  exponential integrator scheme \cite{Nou}(where $\alpha=1$)  since the fractional derivative is not local and therefore  numerical solution at $t_m$ depends to all previous numerical solutions up to $t_m$. This is   in contrast to the  standard  exponential  numerical scheme  where the  numerical solution  at  $t_m$ depends only of that  at  $t_{m-1}$. 

Our main result, which is indeed our full convergence result is given in the following theorem.
\begin{thm}
	\label{main}
	Let $X(t_m)$ be the mild solution of \eqref{pbfrac2} at time $t_m=m\Delta t$ given by \eqref{solpbfrac}. Let $X^h_m$ be the numerical approximation through \eqref{numexpsol}. Under Assumptions \ref{init}-\ref{fracdiff}, the following  estimation holds
	\begin{eqnarray}
		\left\|X(t_m)-X^h_m\right\|_{L^2(\Omega;\mathcal{H})}\leq C\left(h^{2H+\beta-1}+\Delta t^{\frac{\min(\alpha(2H+\beta-1),2-2\alpha)}2}\right).
	\end{eqnarray}
\end{thm}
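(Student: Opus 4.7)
The plan is to decompose the total error by the triangle inequality,
\[
\|X(t_m) - X^h_m\|_{L^2(\Omega;\mathcal{H})} \le \|X(t_m) - X^h(t_m)\|_{L^2(\Omega;\mathcal{H})} + \|X^h(t_m) - X^h_m\|_{L^2(\Omega;\mathcal{H})},
\]
where the spatial piece is already controlled at order $h^{2H+\beta-1}$ by Lemma \ref{spaerrfrac}. The heart of the proof is therefore the fully temporal error, for which one needs
\[
\|X^h(t_m) - X^h_m\|_{L^2(\Omega;\mathcal{H})} \le C\Delta t^{\min(\alpha(2H+\beta-1),\,2-2\alpha)/2}.
\]

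Subtracting the scheme \eqref{numexpsol} from the semidiscrete mild form \eqref{dissolfrac}, the initial-value terms cancel and the remainder splits into three differences: drift, It\^{o} integral, and fBm integral. On each subinterval $[t_j,t_{j+1}]$ each of these is further split into a \emph{kernel error} (where $(t_m-s)^{\alpha-1}\mathcal{S}_{2h}(t_m-s)$ or $\mathcal{S}_{1h}(t_m-s)$ is replaced by its value at $s=t_j$) and, for the deterministic and It\^{o} terms, an \emph{integrand error} (where $F(X^h(s))$ or $G(X^h(s))$ is replaced by its value at $s=t_j$). The integrand error is itself split via $F(X^h(s)) - F(X^h_j) = [F(X^h(s)) - F(X^h(t_j))] + [F(X^h(t_j))-F(X^h_j)]$ (and likewise for $G$); the first summand is controlled by the semidiscrete version of the time-regularity bound \eqref{timeregfrac} from Theorem \ref{regfrac}, and the second produces a term of the form $\|X^h(t_j) - X^h_j\|_{L^2(\Omega;\mathcal{H})}$ to which a discrete Gronwall inequality is ultimately applied.

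To bound the three kernel errors I would use Lemma \ref{fracbound} applied to $A_h$ via Remark \ref{discfracsemigroup}. (i) For the drift, \eqref{semigroup_prp2*} with $t_1=t_m-s$, $t_2=t_m-t_j$ yields $C\Delta t^{1-\alpha}(t_m-s)^{\alpha-1}(t_m-t_j)^{\alpha-1}$; dominating $(t_m-t_j)^{\alpha-1}\le (t_m-s)^{\alpha-1}$ and using the convergence of $\int_0^{t_m}(t_m-s)^{2\alpha-2}ds$ (finite exactly because $\alpha>1/2$), together with the linear-growth bound on $F$ from Assumption \ref{nonlin} and Theorem \ref{well_poss_pbfrac}, gives order $\Delta t^{(2-2\alpha)/2}$. (ii) For the It\^{o} kernel, the isometry \eqref{browint} combined with \eqref{semigroup_prp} at $\eta=(2H+\beta-1)/2$, Assumption \ref{diff} with $\tau=(2H+\beta-1)/2$, and the spatial regularity \eqref{sparegfrac} for $X^h$, gives order $\Delta t^{\alpha(2H+\beta-1)/2}$. (iii) For the fBm kernel, \eqref{fracint} combined with \eqref{semigroup_prp1*} at $\kappa=(1-\beta)/2$, $\varpi=H$, and Assumption \ref{fracdiff}, gives order $\Delta t^{\alpha(2H+\beta-1)/2}$, where the residual $s$-integral of $(t_m-s)^{-\alpha}$ remains integrable because $\alpha<1$.

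Collecting all contributions yields, after squaring, an estimate of the form
\[
\|X^h(t_m)-X^h_m\|_{L^2(\Omega;\mathcal{H})}^2 \le C\Delta t^{\min(\alpha(2H+\beta-1),\,2-2\alpha)} + C\sum_{j=0}^{m-1} w_{m,j}\|X^h(t_j)-X^h_j\|_{L^2(\Omega;\mathcal{H})}^2,
\]
with weights $w_{m,j}$ of order $\Delta t$ (for the It\^{o} integrand contribution) or $\Delta t\,(t_m-t_j)^{2\alpha-2}$ (for the drift integrand contribution), and a (possibly weakly singular) discrete Gronwall lemma then closes the argument. The hardest part I expect is the drift kernel: the doubly singular kernel $(t_m-s)^{\alpha-1}(t_m-t_j)^{\alpha-1}$ together with the non-local memory makes the bookkeeping more delicate than in the classical $\alpha=1$ case, and the estimate \eqref{semigroup_prp2*} is exactly what allows one to convert the Hölder factor $(s-t_j)^{1-\alpha}$ into the clean rate $\Delta t^{1-\alpha}$; the fBm kernel also forces a careful balancing of the exponents in \eqref{semigroup_prp1*} so that, after raising to the $1/H$-power demanded by \eqref{fracint}, the remaining $s$-integral still converges.
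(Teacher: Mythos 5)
Your proposal is correct and follows essentially the same route as the paper: triangle inequality into spatial error (Lemma \ref{spaerrfrac}) plus temporal error, decomposition of the temporal error into drift, It\^{o} and fBm parts with kernel/integrand splittings, the estimates \eqref{semigroup_prp2*}, \eqref{semigroup_prp}, \eqref{semigroup_prp1*} together with the regularity results of Theorem \ref{regfrac}, and a closing discrete Gronwall argument. The only cosmetic difference is that you spell out the drift-kernel bound directly, whereas the paper delegates that term to \cite[(76)--(80)]{Nouc}.
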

Before giving the proof, let us present a preparatory result.
\begin{lem}({\cite[(89)]{Mukb} and \cite[(70)]{Tam}})
	\label{prepaexp2}
	Let $-1\leq \sigma\leq 1$, hence the following estimate holds
	\begin{eqnarray}
		\label{prepaexp21}
		\left\|A_h^{\sigma}P_h u\right\|\leq C\left\|A^{\sigma} u\right\|,   
	\end{eqnarray}
	where $C$ is a positive constant independent of $h$.
\end{lem}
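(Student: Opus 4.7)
The plan is to establish the inequality at the three endpoints $\sigma\in\{-1,0,1\}$ by direct finite-element arguments, and then obtain the full range $\sigma\in[-1,1]$ by operator interpolation in the fractional-power scales of $A$ and $A_h$. The case $\sigma=0$ is immediate: since $P_h$ is the $L^2$-orthogonal projection, $\|P_h u\|\le\|u\|$.

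For $\sigma=1$, I would introduce the Ritz projection $R_h\colon V\to V_h$ defined by $a(R_h u-u,\chi)=0$ for every $\chi\in V_h$. Combining \eqref{pro} and \eqref{dis} gives the fundamental identity $A_h R_h=P_h A$ on $D(A)$, so that $A_h P_h u=P_h Au+A_h(P_h-R_h)u$. The first summand is controlled by $\|Au\|$ via the $L^2$-contractivity of $P_h$. For the residual, the idea is to combine the standard $L^2$ error bounds $\|(I-P_h)u\|+\|(I-R_h)u\|\le Ch^2\|Au\|$ with the inverse inequality $\|A_h v_h\|\le Ch^{-2}\|v_h\|$ valid on $V_h$ under the assumed regularity of $J_h$, which yields $\|A_h(P_h-R_h)u\|\le C\|Au\|$. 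For $\sigma=-1$, applying $A_h^{-1}$ on the left of the identity $A_h R_h=P_h A$ produces $A_h^{-1}P_h=R_h A^{-1}$, hence $\|A_h^{-1}P_h u\|=\|R_h A^{-1}u\|\le C\|A^{-1}u\|$, the last step being the $L^2$-stability of $R_h$, which itself follows from an Aubin--Nitsche duality argument applied to the adjoint problem (well posed thanks to the coercivity \eqref{ellip2} of both $A$ and its adjoint).

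To pass from the three endpoints to arbitrary $\sigma\in[-1,1]$, I would use that $A$ and $A_h$ are positive sectorial generators on the Hilbert space $\mathcal{H}$, so their fractional powers are defined via the calculus underlying \eqref{fractional} and the complex-interpolation spaces satisfy $[\mathcal{H},D(A)]_\theta=D(A^\theta)$ with norm equivalent to $\|A^\theta\cdot\|$, and likewise for $A_h$ with constants independent of $h$. Interpolating the endpoint inequalities between $\sigma=0$ and $\sigma=1$, and between $\sigma=-1$ and $\sigma=0$, delivers the claim. The main obstacle is the $h$-uniformity of all constants: one must verify that the sectoriality constants of $A_h$ do not degenerate with $h$ (which is inherited from the $h$-uniform coercivity of $a$ on $V_h\subset V$) and that the $L^2$-stability constant of $R_h$ is likewise $h$-independent (which uses the quasi-uniformity of $J_h$). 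Once these two uniformity statements are in hand, the interpolation step is automatic and the constant $C$ in \eqref{prepaexp21} depends only on $\lambda_0$, $c_0$, and the shape regularity of $J_h$.
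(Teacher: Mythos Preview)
The paper does not give a proof of this lemma; it is quoted directly from \cite[(89)]{Mukb} and \cite[(70)]{Tam} as a preparatory tool, so there is no argument in the paper to compare against. Your proposal therefore supplies what the paper deliberately omits.

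The strategy you outline---establish the endpoint cases $\sigma\in\{-1,0,1\}$ via the Ritz identity $A_hR_h=P_hA$, the $L^2$-contractivity of $P_h$, the $L^2$-stability of $R_h$ (by Aubin--Nitsche duality on the adjoint problem), and the inverse inequality to absorb the $A_h(P_h-R_h)$ residual, then pass to general $\sigma$ by complex interpolation in the fractional-power scales of $A$ and $A_h$---is the standard route for non-self-adjoint elliptic operators and is essentially what one finds in the cited references. The two uniformity issues you single out (the sectoriality constants of $A_h$ and the $L^2$-stability constant of $R_h$) are indeed the only places where care is required; both follow from the $h$-uniform coercivity \eqref{ellip2} restricted to $V_h$, quasi-uniformity of the mesh $J_h$, and $H^2$-regularity of the adjoint problem on the smooth domain $\Lambda$. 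One minor caveat: the paper's equation \eqref{dis} carries a sign that would make $A_h$ negative definite; your derivation implicitly (and correctly) uses the standard convention $(A_h\rho,\xi)=a(\rho,\xi)$, which is what is needed for $S_h(t)=e^{-tA_h}$ to be a contraction as asserted in Remark~\ref{contraction}.
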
 
\subsection {Proof of Theorem \ref{main}}
We are now ready to prove the first result of our main theorem. In fact using the standard technique in the error analysis, we split the fully discrete error in two terms as
\begin{eqnarray}
	\label{err}
	\left\|X(t_m)-X^h_m\right\|_{L^2(\Omega;\mathcal{H})}&\leq& \left\|X(t_m)-X^h(t_m)\right\|_{L^2(\Omega;\mathcal{H})}+\left\|X^h(t_m)-X^h_m\right\|_{L^2(\Omega;\mathcal{H})}\nonumber\\
	&=:&  err_0+err_1.
\end{eqnarray}
Note that the space error $err_0$ is estimate by Lemma \ref{spaerrfrac}. It remains to estimate the time error $err_1$.
We recall that given the mild solution at time $t_m=m\Delta t$ of the semidiscrete problem \eqref{modeldisfrac4} is given by
\begin{eqnarray}
	\label{dismild}
	X^h(t_m)&=&\mathcal{S}_{1h}(t_m)X^h_0+\int_{0}^{t_m} (t_m-s)^{\alpha-1}\mathcal{S}_{2h}(t_m-s)P_hF(X^h(s))ds\nonumber\\
	&+&\int_{0}^{t_m}\mathcal{S}_{1h}(t_m-s)P_hG(X^h(s))dW(s) \nonumber\\
	&+&\int_{0}^{t_m}\mathcal{S}_{1h}(t_m-s)P_h\Phi dB^H(s).  
\end{eqnarray}
Decomposing \eqref{dismild}, we have
\begin{eqnarray}
	\label{itdismild}
	X^h(t_m)&=&\mathcal{S}_{1h}(t_m)X^h_0+\sum_{j=0}^{m-1}\int_{t_j}^{t_{j+1}} (t_m-s)^{\alpha-1}\mathcal{S}_{2h}(t_m-s)P_hF(X^h(s))ds\nonumber\\
	&+&\sum_{j=0}^{m-1}\int_{t_j}^{t_{j+1}}\mathcal{S}_{1h}(t_m-s)P_hG(X^h(s))dW(s)\nonumber\\
	&+&\sum_{j=0}^{m-1}\int_{t_j}^{t_{j+1}}\mathcal{S}_{1h}(t_m-s)P_h\Phi dB^H(s),
\end{eqnarray}
and thanks to \eqref{numexpsol}, we rewrite the numerical solution $X^h_m$ in the integral form as
\begin{eqnarray}
	\label{itnumexpsol}
	X^h_m&=&\mathcal{S}_{1h}(t_m)P_hX_0+\Delta t\sum_{j=0}^{m-1}(t_m-t_j)^{\alpha-1}\mathcal{S}_{2h}(t_m-t_j)P_hF(X^h_j)\nonumber\\
	&+&\sum_{j=0}^{m-1}\mathcal{S}_{1h}(t_m-t_j)P_hG(X^h_j)\,\Delta W_j+\sum_{j=0}^{m-1}\mathcal{S}_{1h}(t_m-t_j)P_h\Phi\,\Delta B^H_j\nonumber\\
	&=&\mathcal{S}_{1h}(t_m)X^h_0+\sum_{j=0}^{m-1}\int_{t_j}^{t_{j+1}} (t_m-t_j)^{\alpha-1}\mathcal{S}_{2h}(t_m-t_j)P_hF(X^h_j)ds\nonumber\\
	&+&\sum_{j=0}^{m-1}\int_{t_j}^{t_{j+1}}\mathcal{S}_{1h}(t_m-t_j)P_hG(X^h_j)dW(s) \nonumber\\
	&+&\int_0^{t_m}\mathcal{S}_{1h}(t_m-\llcorner s\lrcorner)P_h\Phi dB^H(s),
\end{eqnarray}
where the notation $\llcorner s\lrcorner$ is defined as in \cite[(89)]{Nou} by 
\begin{eqnarray}
	\label{not}
	\llcorner s\lrcorner:=\left[\frac{t}{\Delta t}\right] \Delta t.
\end{eqnarray}
Subtracting \eqref{itdismild} and \eqref{itnumexpsol}, applying triangle inequality, taking the square and the estimate $(a+b+c)^2\leq 3(a^2+b^2+c^2)$, it follows that 
\begin{eqnarray}
	\label{expcon*}
	&&err_1^2\nonumber\\
	&\leq& 3\left\|\sum_{j=0}^{m-1}\int_{t_j}^{t_{j+1}} \left((t_m-s)^{\alpha-1}\mathcal{S}_{2h}(t_m-s)P_hF(X^h(s))-\right.\right.\nonumber\\
	&&\hspace{3cm}\left.\left.
	(t_m-t_j)^{\alpha-1}\mathcal{S}_{2h}(t_m-t_j)P_hF(X^h_j)\right)ds\right\|^2_{L^2(\Omega;\mathcal{H})} \nonumber\\
	&+&3\left\| \sum_{j=0}^{m-1}\int_{t_j}^{t_{j+1}}\left(\mathcal{S}_{1h}(t_m-s)P_hG(X^h(s))-\mathcal{S}_{1h}(t_m-t_j)P_hG(X^h_j)\right)dW(s)\right\|^2_{L^2(\Omega;\mathcal{H})} \nonumber\\
	&+&\left\| \int_0^{t_m}\left(\mathcal{S}_{1h}(t_m-s)-\mathcal{S}_{1h}(t_m-\llcorner s\lrcorner)\right)P_h\Phi dB^H(s)\right\|^2_{L^2(\Omega;\mathcal{H})}\nonumber\\
	&=:&  3(V_1^2+V_2^2+V_3^2). 
\end{eqnarray}
Following closely the work done in \cite[(76)-(80)]{Nouc}, we have
\begin{eqnarray}
	\label{expcon1*}
	V_1\leq C\Delta t^{\frac{\min(\alpha(2H+\beta-1),2-2\alpha)}2}+C\Delta t^{\alpha}\sum_{j=0}^{m-1}\left\|X^h(t_j)-X^h_j\right\|_{L^2(\Omega;\mathcal{H})}.
\end{eqnarray}
Hence
\begin{eqnarray}
	\label{expcon1}
	V_1^2\leq C\Delta t^{\min(\alpha(2H+\beta-1),2-2\alpha)}+C\Delta t^{2\alpha-1}\sum_{j=0}^{m-1}\left\|X^h(t_j)-X^h_j\right\|^2_{L^2(\Omega;\mathcal{H})}.
\end{eqnarray}
By adding and subtracting a term, using triangle inequality, we recast $V_2$ as follows
\begin{eqnarray}
	\label{expcon2*}
	V_2&:=&\left\| \sum_{j=0}^{m-1}\int_{t_j}^{t_{j+1}}\left(\mathcal{S}_{1h}(t_m-s)P_hG(X^h(s))-\mathcal{S}_{1h}(t_m-t_j)P_hG(X^h_j)\right)dW(s)\right\|_{L^2(\Omega;\mathcal{H})} \nonumber\\
	&\leq&\left\| \sum_{j=0}^{m-1}\int_{t_j}^{t_{j+1}}\left[\mathcal{S}_{1h}(t_m-s)-\mathcal{S}_{1h}(t_m-t_j)\right]P_hG(X^h(s))dW(s)\right\|_{L^2(\Omega;\mathcal{H})} \nonumber\\
	&+&\left\| \sum_{j=0}^{m-1}\int_{t_j}^{t_{j+1}}\mathcal{S}_{1h}(t_m-t_j)P_h\left[G(X^h(s))-G(X^h(t_j))\right]dW(s)\right\|_{L^2(\Omega;\mathcal{H})} \nonumber\\
	&+&\left\| \sum_{j=0}^{m-1}\int_{t_j}^{t_{j+1}}\mathcal{S}_{1h}(t_m-t_j)P_h(\Delta t)\left[G(X^h(t_j))-G(X^h_j)\right]dW(s)\right\|_{L^2(\Omega;\mathcal{H})} \nonumber\\
	&=:& \sum_{i=1}^3 V_{2i}.
\end{eqnarray}
Using the martingale property of the stochastic integral, the It\^o isometry \eqref{browint}, inserting an appropriate power of $A_h$, \eqref{prepaexp21}, the semidiscrete version of \eqref{semigroup_prp} and \eqref{semigroup_prp*} with $\sigma=\eta=\delta=\frac{2H+\beta-1}{2}$ and $t_1=t_m-s$, $t_2=t_m-t_j$, Assumption \ref{diff} with $\tau=\frac{2H+\beta-1}2$ and the semidiscrete version of Theorem \ref{regfrac}(more precisely \eqref{sparegfrac}) yields

\begin{eqnarray}
	\label{expcon21}
	V_{21}^2&:=& \left\| \sum_{j=0}^{m-1}\int_{t_j}^{t_{j+1}}\left[\mathcal{S}_{1h}(t_m-s)-\mathcal{S}_{1h}(t_m-t_j)\right]P_hG(X^h(s))dW(s)\right\|^2_{L^2(\Omega;\mathcal{H})} \nonumber\\
	&=& \sum_{j=0}^{m-1}\mathbb{E}\left[\int_{t_j}^{t_{j+1}}\left\|\left[\mathcal{S}_{1h}(t_m-s)-\mathcal{S}_{1h}(t_m-t_j)\right]A_h^{-\frac{2H+\beta-1}2}A_h^{\frac{2H+\beta-1}2}P_hG(X^h(s))\right\|^2_{L^0_2}ds\right] \nonumber\\
	&\leq& \sum_{j=0}^{m-1}\mathbb{E}\left[\int_{t_j}^{t_{j+1}} \left\| A_h^{-\frac{2H+\beta-1}2}\left[\mathcal{S}_{1h}(t_m-s)-\mathcal{S}_{1h}(t_m-t_j)\right]\right\|^2_{L(\mathcal{H})}\left\| A_h^{\frac{2H+\beta-1}2}P_hG(X^h(s))\right\|^2_{L^0_2}ds\right] \nonumber\\
	&\leq& C\sum_{j=0}^{m-1}\mathbb{E}\left[\int_{t_j}^{t_{j+1}} (s-t_j)^{\alpha(2H+\beta-1)} \left\| A^{\frac{2H+\beta-1}2}G(X^h(s))\right\|^2_{L^0_2}ds\right] \nonumber\\
	&\leq& C\Delta t^{\alpha(2H+\beta-1)}\sum_{j=0}^{m-1}\int_{t_j}^{t_{j+1}}  \left(1+\mathbb{E}\left[\left\| A^{\frac{2H+\beta-1}2}X^h(s)\right\|^2\right]\right)ds \nonumber\\
	&\leq& C\Delta t^{\alpha(2H+\beta-1)}\int_{0}^{t_m}  \left(1+\sup_{0\leq s\leq T}\left\| A_h^{\frac{2H+\beta-1}2}X^h(s)\right\|^2_{L^2(\Omega;\mathcal{H})}\right)ds \nonumber\\
	&\leq& C\Delta t^{\alpha(2H+\beta-1)}.
\end{eqnarray} 
To estimate the last two terms $V_{22}^2$ and $V_{23}^2$, using the martingale property of the stochastic integral, applying It\^o isometry, boundedness of $P_h$ and $\mathcal{S}_{1h}$, Assumption \ref{diff} and the semidiscrete version of Theorem \ref{regfrac} (more precisely \eqref{timeregfrac}) holds
\begin{eqnarray}
	\label{expcon22}
	V_{22}^2&=:&\left\| \sum_{j=0}^{m-1}\int_{t_j}^{t_{j+1}}\mathcal{S}_{1h}(t_m-t_j)P_h\left[G(X^h(s))-G(X^h(t_j))\right]dW(s)\right\|^2_{L^2(\Omega;\mathcal{H})} \nonumber\\
	&=& \sum_{j=0}^{m-1}\mathbb{E}\left[\int_{t_j}^{t_{j+1}}\left\|\mathcal{S}_{1h}(t_m-t_j)P_h\left[G(X^h(s))-G(X^h(t_j))\right]\right\|^2_{L^0_2}ds\right] \nonumber\\
	&\leq& C\sum_{j=0}^{m-1}\left\|\mathcal{S}_{1h}(t_m-t_j)P_h\right\|^2_{L(\mathcal{H})}\int_{t_j}^{t_{j+1}}\mathbb{E}\left\|\left(G(X^h(s))-G(X^h(t_j))\right)\right\|^2_{L^0_2}ds \nonumber\\
	&\leq& C\sum_{j=0}^{m-1}\int_{t_k}^{t_{k+1}}\left\|X^h(s)-X^h(t_k)\right\|^2_{L^2(\Omega;\mathcal{H})}ds \nonumber\\
	&\leq& C\sum_{j=0}^{m-1}\int_{t_j}^{t_{j+1}}(s-t_j)^{\min(\alpha(2H+\beta-1),2-2\alpha)}ds \nonumber\\
	&\leq& C\Delta t^{\min(\alpha(2H+\beta-1),2-2\alpha)},
\end{eqnarray}
and 
\begin{eqnarray}
	\label{expcon23}
	V_{23}^2&=:&\left\| \sum_{j=0}^{m-1}\int_{t_j}^{t_{j+1}}\mathcal{S}_{1h}(t_m-t_j)\left[P_hG(X^h(t_j))-P_hG(X^h_j)\right]dW(s)\right\|^2_{L^2(\Omega;\mathcal{H})} \nonumber\\
	&=& \sum_{j=0}^{m-1}\mathbb{E}\left[\int_{t_j}^{t_{j+1}}\left\|\mathcal{S}_{1h}(t_m-t_j)P_h\left(G(X^h(t_j))-G(X^h_j)\right)\right\|^2_{L^0_2}ds\right] \nonumber\\
	&\leq&
	\sum_{j=0}^{m-1}\mathbb{E}\left[\int_{t_j}^{t_{j+1}}\left\|\mathcal{S}_{1h}(t_m-t_j)P_h\right\|^2_{L(\mathcal{H})}\left\|\left(G(X^h(t_j))-G(X^h_j)\right)\right\|^2_{L^0_2}ds\right] \nonumber\\
	&\leq& C\sum_{j=0}^{m-1}\int_{t_j}^{t_{j+1}}\mathbb{E}\left\|X^h(t_j)-X^h_j\right\|^2ds \nonumber\\
	&\leq& C\Delta t\sum_{j=0}^{m-1}\left\|X^h(t_j)-X^h_j\right\|^2_{L^2(\Omega;\mathcal{H})}.
\end{eqnarray}
Putting \eqref{expcon21}, \eqref{expcon22} 	and \eqref{expcon23} in \eqref{expcon2*} leads
\begin{eqnarray}
	\label{expcon2}
	V_2^2\leq C\Delta t^{{\min(\alpha(2H+\beta-1),2-2\alpha)}}+C\Delta t\sum_{k=0}^{m-1}\left\|X^h(t_k)-X^h_k\right\|^2_{L^2(\Omega;\mathcal{H})}.
\end{eqnarray}
Concerning the approximation of $V_3^2$, using \eqref{fracint}, inserting an appropriate power of $A_h$, the semidiscrete version of \eqref{semigroup_prp1*} and \eqref{semigroup_prp*} with $t_1=t_m-s$, $t_2=t_m-t_j$, $\kappa=\delta=\frac{1-\beta}2$ and $\varpi=H$, Lemma \ref{prepaexp2} with  $\sigma=\frac{\beta-1}2$ and Assumption \ref{fracdiff} yields
\begin{eqnarray}
	\label{expcon3}
	&&V_3^2\nonumber\\
	&:=&\left\| \int_0^{t_m}\left(\mathcal{S}_{1h}(t_m-s)-\mathcal{S}_{1h}(t_m-\llcorner s\lrcorner)\right)P_h\Phi dB^H(s)\right\|^2_{L^2(\Omega;\mathcal{H})}\nonumber\\
	&\leq& C(H)\sum_{i=0}^{\infty}\left(\int_0^{t_m}\left\|\left(\mathcal{S}_{1h}(t_m-s)-\mathcal{S}_{1h}(t_m-\llcorner s\lrcorner)\right)A_h^{\frac{1-\beta}2}A_h^{\frac{\beta-1}2}P_h\Phi Q^{\frac 12}e_i\right\|^{\frac 1H} ds\right)^{2H}\nonumber\\
	&\leq& C(H)\sum_{i=0}^{\infty}\left(\sum_{j=0}^{m-1}\int_{t_j}^{t_{j+1}}\left\|A_h^{\frac{1-\beta}2}\left(\mathcal{S}_{1h}(t_m-s)-\mathcal{S}_{1h}(t_m-t_j)\right)\right\|^{\frac 1H}_{L(\mathcal{H})}\right.\nonumber\\
	&&\hspace{3cm}\left.\left\|A_h^{\frac{\beta-1}2}P_h\Phi Q^{\frac 12}e_i\right\|^{\frac 1H} ds\right)^{2H}\nonumber\\
	&\leq& C\sum_{i=0}^{\infty}\left(\sum_{j=0}^{m-1}\int_{t_j}^{t_{j+1}}(s-t_j)^{\frac{\alpha(2H+\beta-1)}{2H}}(t_m-s)^{-\alpha}\left\|A^{\frac{\beta-1}2}\Phi Q^{\frac 12}e_i\right\|^{\frac 1H} ds\right)^{2H}\nonumber\\
	&\leq& C\Delta t^{\alpha(2H+\beta-1)}\left(\int_{0}^{t_m}(t_m-s)^{-\alpha} ds\right)^{2H}\left\|A^{\frac{\beta-1}2}\Phi\right\|^2_{L^0_2}\nonumber\\
	&\leq& C\Delta t^{\alpha(2H+\beta-1)}.
\end{eqnarray} 
Substituting \eqref{expcon1}, \eqref{expcon2} and \eqref{expcon3} in \eqref{expcon*} yields
\begin{eqnarray}
	\label{expcon**}
	&&\left\|X^h(t_m)-X^h_m\right\|^2_{L(\mathcal{H})}\nonumber\\
	&\leq&C\Delta t^{\min(\alpha(2H+\beta-1),2-2\alpha)}+C\Delta t^{2\alpha-1}\sum_{k=0}^{m-1}\left\|X^h(t_k)-X^h_k\right\|^2_{L(\mathcal{H})}.
\end{eqnarray}
Applying the discrete Gronwall's inequality to \eqref{expcon**} and taking the squared-root leads
\begin{eqnarray}
	\label{expcon}
	\left\|X^h(t_m)-X^h_m\right\|_{L(\mathcal{H})}&\leq&C\Delta t^{\frac{\min(\alpha(2H+\beta-1),2-2\alpha)}2}.
\end{eqnarray}
Adding \eqref{spacon} and \eqref{expcon} completes the proof.$\hfill\square$

	\section*{References}


\begin{thebibliography}{00}
	
	\bibitem{Alo} 
	E. Al\'os, D.  Nualart, 
	\newblock{Stochastic integration with respect to the fractional Brownian motion}, 
	\newblock{Stoch. Stoch. Rep. \textbf{75}, 129--152 (2003)}
	
	
	
	
	\bibitem{Car} 
	T. Caraballo , M. J. Garrido-Atienza,  and  T.  Taniguchi,  
	\newblock{The existence and exponential behavior of solutions to stochastic delay evolution equations with a fractional Brownian motion},
	\newblock{Nonlinear Analysis} \textbf{74}, 3671--3684  (2011)
	
	
	
	\bibitem{Duna}
	T. E. Duncan, B. Maslowski, and  B. Pasik-Duncan, 
	\newblock{Fractional Brownian motion and stochastic equations in Hilbert spaces}, 
	\newblock{Stoch. Dyn.} \textbf{2}, 225-250 (2002)
	
	\bibitem{Dunb}
	T.  E. Duncan, B.  Maslowski, B.  Pasik-Duncan, 
	\newblock{Semilinear stochastic equations in a Hilbert spaces with a fractional Brownian motion},
	\newblock{ SIAM J.Math Anal.} \textbf{40}, 2286--2315 (2009)
	
	\bibitem{Elz} 
	T.  M. Elzaki, Y. Daoud, and  J. Biazar,   
	\newblock{Decomposition Method for Fractional Partial Differential Equations Using Modified Integral Transform}, 
	\newblock{World Applied Sciences Journal, \textbf{37}(1) 18--24 (2019), DOI: 10.5829/idosi.wasj.2019.18.24.}
	
	\bibitem{For}
	N.  J. Ford, J. Xiao, and  Y. Yan,    
	\newblock{A finite element method for time fractional partial differential equation}, 
	\newblock{Fractional Calculus and Aplied analysis. vol \textbf{14}(3), 454--474  (2011)}
	
	
	\bibitem{Fuj} 
	F. Fujita,   T.  Suzuki, 
	\newblock{ Evolution problems (Part 1). Handbook of Numerical Analysis (P. G. Ciarlet and J.L. Lions eds)}.
	\newblock{Amsterdam, The Netherlands: North-Holland, \textbf{2}, 789--928 (1991)}
	
	\bibitem{Gao} 
	G. H. Gao,  Z.  Z. Sun, and  H. W.  Zhang,   
	\newblock{A new fractional numerical differentiation formula to approximate the Caputo fractional derivative and its applications},
	\newblock{Journal of Computational Physics \textbf{259}, 33-50 (2014) http://dx.doi.org/10.1016/j.jcp.2013.11.017.}
	
	\bibitem{Gar1}
	R. Garrappa , 
	\newblock{ A family of Adams exponential integrators for fractional linear systems},
	\newblock{Computers \& Mathematics with Applications},  \textbf{66}(5), 717--727 (2013)
	
	\bibitem{Gar2}
	R. Garrappa,  M. Popolizio,  
	\newblock{Computing the Matrix Mittag-Leffler Function with Applications to Fractional Calculus},
	\newblock{ J Sci Comput}  \textbf{77}, 129--153 (2018) https://doi.org/10.1007/s10915-018-0699-5.
	
	\bibitem{Gun}
	M. Gunzburger , B. Li,  J.  Wang,   
	\newblock{Sharp convergence rates of time discretization for srochastic time-fractional PDEs subject to additive space-time white noise},
	\newblock{arXiv:1704.02912v2 [math.NA] 8 Aug 2018.} 
	
	\bibitem{Hau}
	H.  J. Haubold,  A. M. Mathai, R. K.  Saxena, 
	  \newblock{Mittag-Leffler functions and their applications}, 
	\newblock{J Appl Math (2011) (Article ID 298628).}
	
	
	
	\bibitem{Jia}
	Y. Jianga,   J. Ma, 
	\newblock{High-order finite element methods for time-fractional partial differential equations},
	\newblock{Journal of Computational and Applied Mathematics \textbf{235}, 3285-3290 (2011) doi:10.1016/j.cam.2011.01.011.}
	
	
	
	\bibitem{Kil}
	A. A.  Kilbas,  H.  M. Srivastava,  J. J.  Trujillo,
	\newblock{Theory and applications of fractional differential equations} 
	\newblock{Elsevier, New York (2006).}
	
	
	
	
	
	
	\bibitem{Liu}
	 Y. Liu,  H. Li, W. Gao, S. He, and Z. Fang,  
	\newblock{A New Mixed Element Method for a Class of Time-Fractional Partial Differential Equations}, 
	\newblock{The Scientific World Journal, \textbf{2014}, Article ID 141467, 8 pages, http://dx.doi.org/10.1155/2014/141467.}
	
	\bibitem{Lor}
	G.  J. Lord, A. Tambue, 
	\newblock{Stochastic exponential integrators for the finite element discretization of SPDEs for multiplicative \& additive noise,}
	\newblock{IMA J Numer. Anal. \textbf{2}, 515--543 (2013)}
	
	\bibitem{Mai} 
	F. Mainardi,  
	\newblock{The fundamental solutions for the fractional diffusion-wave equation,} 
	\newblock{Appl. Math. Lett. \textbf{9}, 23-28 (1996)}
	
	
	
	
	\bibitem{Mis}
	Y. Mishura, 
	\newblock{Stochastic calculus for Fractional Brownian Motion and Related Processes,}
	\newblock{Lecture Notes in Mathematics,  Springer-Verlag Berlin Heidelberg, vol. 1909, (2008)}
	
	
	\bibitem{Mor}
	I. Moret ,  P. Novati,  
	\newblock{On the Convergence of Krylov Subspace Methods for Matrix Mittag-Leffler Functions}
	\newblock{SIAM Journal on  Numerical Analysis}, \textbf{49}(5), 144--216 (2011)
	
	\bibitem{Nou}
	A.  J. Noupelah,   A. Tambue, 
	\newblock{Optimal strong convergence rates of some Euler-type timestepping schemes for the finite element discretization SPDEs driven by additive fractional Brownian motion and Poisson random measure,}
	\newblock{Numerical Algorithm (2020), https://doi.org/10.1007/s11075-020-01041-1.}
	
	\bibitem{Nouc}
	A.  J. Noupelah,   A. Tambue, 
	\newblock{Strong convergence rates of an exponential integrator and finite elements method for time-fractional SPDEs driven by Gaussian and non-Gaussian noises}
	\newblock{Preprint  2021,  arXiv:2011.03862v3.}
	
	
	
	
	\bibitem{Pra}
	D. Prato, G.  J.  Zabczyk, 
	\newblock{Stochastic Equations in Infinite Dimensions,}
	\newblock{Cambridge University Press, Cambridge, United Kingdom, vol. 152 (2014)}
	
	\bibitem{Pre}
	C.  Pr\'{e}v\^{o}t,  M. R\"{o}ckner,
	\newblock{ A Concise Course on Stochastic Partial Differential Equations,}
	\newblock{Lecture Notes in Mathematics,  Springer: Berlin, vol. 1905, (2007)}
	
	\bibitem{Pop}
	 M. Popolizio, 
	\newblock{ On the Matrix Mittag-Leffler Function: Theoretical Properties and Numerical Computation,}
	\newblock{ Mathematics \textbf{7}(12), 11--40 (2019)}, https://doi.org/10.3390/math7121140.
	
	\bibitem{Pri}
	G.  S. Priya,   P. Prakash,  J.  J.  Nieto, Z.  Kayar, 
	\newblock{Higher Order Numerical Scheme for the Fractional Heat Equation with Dirichlet and Neumann Boundary Conditions}, 
	\newblock{Numerical Heat Transfert, Part B: Fundamentals, \textbf{text}(6) , 540--559 (2013) DOI:
		10.1080/10407790.2013.778719G.}
	
	
	
	
	\bibitem{ATthesis}
	A. Tambue, 
	\newblock Efficient Numerical Schemes for Porous Media Flow.
	\newblock {\em PhD Thesis, Department of Mathematics, Heriot--Watt University} (2010).
	
	
	\bibitem{Mukb}
	A. Tambue,  J. D.  Mukam,
	\newblock{Strong convergence of the linear implicit Euler method for the finite element discretization of semilinear SPDEs driven by multiplicative or additive noise,}
	\newblock{ Applied Mathematics and Computation}  \textbf{346}, 23--40 (2019)
	
	\bibitem{Tam}
	A. Tambue,  J. M.  T. Ngnotchouye,
	\newblock{Weak convergence for a stochastic exponential integrator and finite element discretization of stochastic partial differential equation with multiplicative \& additive noise,}
	\newblock{Appl. Num. Math. \textbf{108}, 57--86 (2016)}
	
	
	
	
	
	
	
	
	\bibitem{Zoud}
	Guang-an Zou,  A. Atangana   Y. Zhou,  
	\newblock{Error estimates of a semidiscrete finite element method for fractional stochastic diffusion-wave equations},
	\newblock{Numer Methods Partial Differ Equ. (2018) https://doi.org/10.1002/num.22252} 
	
	\bibitem{Zoub}
        Guang-an Zou,
	\newblock{Galerkin finite element method for time-fractional stochastic diffusion equations},
	\newblock{Computational and Applied Mathematics, \textbf{37}, 4877--4898 (2018) }
	\bibitem{Zouc}
	Guang-an Zou,
	\newblock{Numerical solutions to time-fractional stochastic partial differential equations},
	\newblock{Numerical Algorithms (2018). https://doi.org/10.1007/s11075-018-0613-0.}
\end{thebibliography}
\end{document}